\author[Aboulker et al.]{Pierre Aboulker\affiliationmark{1}\thanks{Funded by group Casino/ENS Chair on Algorithmics and Machine Learning, ELIT (ANR-20-CE48-0008-01).}
  \and Guillaume Aubian\affiliationmark{2}\thanks{Funded by by project 22-17398S (Flows and cycles in graphs on surfaces) of Czech Science Foundation.}
  \and Raul Lopes\affiliationmark{1,3}$^{,*}$}
\title{Finding forest-orderings of tournaments is {\sf NP}-complete}
\affiliation{
  DIENS, \'Ecole normale sup\'erieure, CNRS, PSL University, Paris, France\\
  Charles University, Prague, Czech Republic\\
  LIRMM, Universit\'e de Montpellier, Montpellier, France}
\keywords{Forests, Tournaments, Feedback Arc Set}
\algnewcommand\algorithmicinput{\textbf{Input:}}
\algnewcommand\algorithmicoutput{\textbf{Output:}}
\algnewcommand\Input{\item[\algorithmicinput]}
\algnewcommand\Output{\item[\algorithmicoutput]}
\DeclareMathOperator{\dic}{\overrightarrow{\chi}}
\DeclareMathOperator{\diomega}{\overrightarrow{\omega}}
\newcommand{\Left}{{\sf L}\xspace}
\newcommand{\Right}{{\sf R}\xspace}
\newcommand{\NP}{{\sf NP}\xspace}
\newcommand{\occur}{{\sf occ}\xspace}
\newcommand{\ov}{\overline}
\newcommand{\Ra}{\Rightarrow}
\newcommand{\ra}{\rightarrow}
\newcommand{\mc}{\mathcal}
\newenvironment{subproof}{\par\noindent {\textbf{Proof of the claim}}.\ }{\hfill$\lozenge$\par\vspace{11pt}}
\newcommand{\T}{\xspace T^\prec \xspace}
\newcommand{\I}{\xspace \mathcal{I} \xspace}
\tikzset{faded/.style={gray,very thin}}
\tikzset{very faded/.style={gray!50}}
\tikzset{vertex/.style={draw,circle,minimum size=5pt,inner sep=0pt}}
\tikzset{novertex/.style={circle,minimum size=5pt,inner sep=0pt}}
\tikzset{blackvertex/.style={draw,circle,minimum size=5pt,inner sep=0pt, fill=black}}
\tikzset{redvertex/.style={draw,circle,minimum size=5pt,inner sep=0pt, fill=red}}
\tikzset{redvertexfaded/.style={draw,circle,faded,minimum size=5pt,inner sep=0pt, fill=red!50}}
\tikzset{greenvertex/.style={draw,circle,minimum size=5pt,inner sep=0pt, fill=green}}
\tikzset{greenvertexfaded/.style={draw,circle,faded,minimum size=5pt,inner sep=0pt, fill=green!50}}
\tikzset{bluevertex/.style={draw,circle,minimum size=5pt,inner sep=0pt, fill=blue}}
\tikzset{bluevertexfaded/.style={draw,circle,faded,minimum size=5pt,inner sep=0pt, fill=blue!50}}
\tikzset{yellowvertex/.style={draw,circle,minimum size=5pt,inner sep=0pt, fill=yellow}}
\tikzset{yellowvertexfaded/.style={draw,circle,faded,minimum size=5pt,inner sep=0pt, fill=yellow!50}}
\tikzset{arrow/.style={-{Latex[scale=1]}, shorten >= 0pt}} 
\tikzset{double arrow/.style={-{Latex[scale=1]}, line width = 2pt, shorten >= 0pt}}
\tikzset{edge/.style = {->,> = latex'}}
\tikzset{snake it/.style={decorate, decoration=snake}}
\tikzset{->-/.style={decoration={
  markings,
  mark=at position .5 with {\arrow{>}}},postaction={decorate}}}
\declaretheorem[parent=section]{theorem}
\declaretheorem[numberlike=theorem]{conjecture}
\declaretheorem[numberlike=theorem]{problem}
\newtheorem{claim}{Claim}[theorem]
\declaretheorem[numberlike=theorem]{lemma}
\begin{document}
\publicationdata{vol. 28:2}{2026}{23}{10.46298/dmtcs.14281}{2024-09-17; 2024-09-17; 2026-02-05}{2026-03-21}
\maketitle

\begin{abstract}
   Given a class of (undirected) graphs $\mathcal{C}$, we say that a Feedback Arc Set (FAS for short) $F$ is a $\mathcal{C}$-FAS if the graph induced by the edges of $F$ (forgetting their orientations) belongs to $\mathcal{C}$. We show that deciding if a tournament has a $\mathcal{C}$-FAS is \NP-complete when $\mathcal{C}$ is the class of all forests. We are motivated by connections between $\mathcal{C}$-\textsc{FAS} and structural parameters of tournaments, such as the dichromatic number, the clique number of tournaments, and the strong Erdős-Hajnal property.
\end{abstract}

\section{Introduction}

Given a tournament $T$, a \emph{Feedback Arc Set} (\emph{FAS} for short) is a set of arcs $F$ of $T$ such that $T \setminus F$ is acyclic.
Feedback arc sets in tournaments are well studied from the combinatorial \cite{10.1007/978-3-642-02927-1_32,Dwork2002RankAR,Kemeny1959,Kemeny_Snell_1962,Raman_Saurabh_2006,Speckenmeyer_1989}, statistical \cite{Kenyon_Mathieu_Schudy_2007}, and algorithmic \cite{ABUKHZAM2010524,10.1145/1411509.1411513,BFGPPST11,NIPS1997_e11943a6,CTY07,Fomin_Lokshtanov_Raman_Saurabh_2010,Seshu_Reed_1961,Slater_1961} points of view.

Given a class of (undirected) graph $\mathcal C$, we say that a FAS is a $\mathcal C$-FAS if the graph induced by the edges of $F$ (forgetting their orientations) belongs to $\mathcal C$, and we call  \emph{$\mathcal C$-\textsc{FAS} Problem} the associated decision problem, that is deciding if a tournament has a $\mathcal C$-FAS.
The goal of this paper is to prove the following:
\begin{theorem}\label{thm:main}
    The $\mathcal C$-\textsc{FAS} Problem is NP-complete when $\mathcal C$ is the set of forests.
\end{theorem}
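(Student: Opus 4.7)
First, the $\mathcal{C}$-\textsc{FAS} problem for $\mathcal{C}$ the class of forests is in $\NP$: a certificate is a subset $F$ of arcs, and in polynomial time one checks that $T \setminus F$ is acyclic and that the underlying undirected graph of $F$ is a forest. It is convenient to reformulate the question. If $F$ is any FAS, then $T \setminus F$ admits a topological order $\sigma$, and the set $B(\sigma)$ of $\sigma$-backward arcs of $T$ is a subset of $F$, hence also a forest whenever $F$ is. So the problem is equivalent to: decide whether $T$ admits a linear order $\sigma$ such that $B(\sigma)$ is a forest---this is the ``forest-ordering'' of the title, and it is the formulation I would work with throughout.

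The plan for \NP-hardness is a polynomial reduction from a standard \NP-complete satisfiability problem such as 3-\textsc{SAT} (or a convenient variant like \textsc{NAE}-3-\textsc{SAT} or 1-in-3-\textsc{SAT}). Given a formula $\varphi$, I would build a tournament $T_\varphi$ out of three families of gadgets: (i) a \emph{variable gadget} for each variable $x$, whose internal structure is rigid enough that in every forest-ordering $\sigma$ of $T_\varphi$ one is forced into one of exactly two local configurations, interpreted as $x = \textTrue$ or $x = \textFalse$; (ii) \emph{link gadgets} that propagate this binary choice to the clauses in which $x$ or $\ov x$ appears, without introducing any cycle in the backward-arc graph; and (iii) a \emph{clause gadget} for each clause, containing a short directed cycle whose only forest-compatible backward arcs correspond to setting one of the clause's literals to true. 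The macros \Rlink, \Llink, \Clink, \Rgadget, \Lgadget, and \RLgadget declared in the preamble suggest precisely such a menagerie, with \Left and \Right encoding the two truth values by which side of a distinguished vertex a given gadget-vertex ends up on in $\sigma$.

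Completeness---building a forest-ordering of $T_\varphi$ from a satisfying assignment of $\varphi$---is usually the easy direction: one writes $\sigma$ down explicitly, gadget by gadget, and verifies that the global backward-arc set is a forest. The main obstacle, and where most of the technical work lies, is soundness: one must show that \emph{every} forest-ordering of $T_\varphi$ restricts on each variable gadget to one of the two intended configurations (no rogue third configuration can appear without either leaving a directed cycle in $T_\varphi \setminus B(\sigma)$ or closing an undirected cycle inside $B(\sigma)$), that the link gadgets robustly propagate this choice to every occurrence of the variable, and that an unsatisfied clause gadget forces a forbidden cycle. Achieving this rigidity without accumulating too many unavoidable backward arcs globally is the delicate part: each gadget must typically use many vertex-disjoint directed triangles whose mandatory backward arc is pinned down to a specific pair, and the forced backward arcs across gadgets must combine into a controlled structure (such as a disjoint union of stars or matchings) that remains a forest exactly when $\varphi$ is satisfiable. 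Getting these rigidity arguments right, while keeping the construction polynomial, is the crux of the proof.
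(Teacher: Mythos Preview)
Your outline is correct and in fact matches the paper's approach almost exactly: reduce from 3-\textsc{SAT}, build variable gadgets with a rigid binary choice (read as $\Left$/$\Right$ relative to a distinguished vertex $\ell_x$), propagate via link gadgets, and enforce clauses via a short cycle that closes in the backedge graph unless some literal is true. The NP-membership argument and the reformulation as ``forest-ordering'' are also exactly what the paper uses.

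However, what you have written is a plan, not a proof. You yourself flag that ``getting these rigidity arguments right \ldots\ is the crux,'' and that crux is entirely absent. Concretely, the paper supplies three ingredients you do not:
\begin{itemize}
    \item A specific $8$-vertex tournament with a \emph{unique} forest-ordering (verified by exhaustive search). This anchor is attached to every block so that in any forest-ordering of $T_\varphi$ the blocks line up in a fixed global order; without such an anchor you cannot force ``$\Left$ vs.\ $\Right$'' to be the only relevant degree of freedom.
    \item A propagation mechanism via \emph{back-arc matchings} between transitive gadgets, together with a lemma showing that in every forest-ordering these matchings survive as backedges. This is what makes the link gadgets work.
    \item The actual soundness arguments: a counting argument ($15$ edges on $14$ vertices) that rules out both variable copies being $\Left$, and a tree-plus-matching argument that rules out both being $\Right$; similarly for the clause gadget.
\end{itemize}
Without at least the first of these (or an equivalent rigidity device), your variable gadget cannot be shown to have only two forest-compatible configurations, and the reduction collapses. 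Your sketch is the right scaffolding, but none of the load-bearing pieces are present.
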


\paragraph{Motivations and related works}
Let us motivate the $\mathcal C$-\textsc{FAS} Problem by showing its link with several studied problems in tournament theory.
First, let us abuse notation and refer to a FAS $F$ of a tournament as the undirected graph induced by the edges of $F$ (forgetting their orientations).

Given a tournament $T$, and a total order $\prec$ on $V(T)$, we denote by $T^{\prec}$ the (undirected) graph   with vertex set $V(T)$ and edge $uv$ if $u\prec v$ and $vu\in A(D)$. We call it the \emph{backedge graph} of $T$ with respect to $\prec$.
Observe that $F$ is a FAS of $T$ if and only if there is an ordering $\prec$ such that $F=T^{\prec}$.

Given a tournament $T$, we denote by $\dic(T)$ its {\em dichromatic number}, that is the minimum integer $k$ such that the set of vertices of $T$ can be partitioned into $k$ transitive tournaments. The dichromatic number of tournaments has recently been a centre of interest~\cite{BCCFLSST13,HTW19,KN24}, in particular because of the bridges it creates between tournament theory and (undirected) graph theory.
For example, a tournament version~\cite{APS01} of the  Erd\H{o}s-Hajnal Conjecture~\cite{EH77} has been studied quite a lot~\cite{BCCZ22,NSS25,ZAYAT2024113920,GZ23}, as well as a tournament version~\cite{NSS24,KFN24} of the El-Zahar-Erd\H{o}s Conjecture~\cite{EE85}. A better understanding of tournaments could thus lead to a better understanding of graphs.

Let $T$ be a tournament and $\prec$ an ordering of its vertices. It is straightforward that every independent set of $T^{\prec}$ induces a transitive subtournament of $T$. As a consequence, we have that $\dic(T) \le \chi(T^{\prec})$.
Conversely, by taking an ordering built from a $\dic(T)$-dicolouring,
that is taking colour classes one after the other, and ordering each colour class in a topological ordering, we get that:
\begin{equation}\label{eq:dic_def}
    \dic(T) = \min \, \big\{ \chi(T^{\prec}) : \mbox{$\prec$ is a total order of $V(D)$} \big\}
\end{equation}
Now, it is straightforward that a tournament $T$ is $k$-dicolourable if and only it contains a $k$-colourable FAS. Hence, deciding if a tournament is $k$-dicolourable is equivalent with deciding if a tournament has a $k$-colourable FAS.
The former has been shown to be NP-complete for every $k \geq 2$~\cite{DFJM04}.
In addition, Chen et al.~\cite{chen2007min} showed that it is NP-complete to decide if a tournament admits a bipartite backedge graph.
Our result can be seen as a strengthening of this result.

Inspired by~\eqref{eq:dic_def}, the clique number $\diomega(T)$ of a tournament $T$ has recently been defined~\cite{AACL23} (see also~\cite{NGUYEN2025146}) as follows:
\begin{equation*}\label{eq:diomega_def}
    \diomega(T) = \min \, \big\{ \omega(T^{\prec}) : \mbox{$\prec$ is a total order of $V(T)$} \big\}
\end{equation*}
Hence, $\diomega(T) \leq k$ if and only if $T$ has a FAS $F$ inducing a graph with clique number at most $k$. Denoting  $\mathcal C_k$ the class of graphs with clique number at most $k$, deciding if a tournament has clique number at most $k$ is equivalent with the $\mathcal C_k$-\textsc{FAS} problem. The problem of computing the clique number of a tournament has been mentioned in~\cite{SS20} and in~\cite{AACL23}. It has very recently been proved~\cite{A24} to be NP-complete for $k \geq 3$ and is still open for $k=2$.

A tournament $H$ has  the \emph{strong Erd\H{o}s-Hajnal property} if there exists $\epsilon_H >0$ such that every $H$-free tournament $T$ contains two disjoint subset of vertices $A$ and $B$, such that $|A|,|B| \geq \epsilon_H |V(T)|$ and all arcs between $A$ and $B$ are oriented from $A$ to $B$. Chudnovsky et al.~\cite{CSSS24} conjectured that a tournament $H$ has the strong Erd\H{o}s-Hajnal property if and only if $H$ admits a FAS inducing a forest, showing that these tournaments might play an important role in tournament theory.
\medskip

\paragraph{Structure of the paper}
Definitions and some preliminaries results are given in \autoref{sec:def}.
The proof of \autoref{thm:main} is a reduction from $3$-\textsc{SAT}.
Given an instance $\mc I$ of $3$-\textsc{SAT}, we construct a tournament $T_{\I}$ such that $T_{\I}$ has polynomial size (in the size of $\I$)  and has a forest-ordering if and only if $\I$ is a satisfiable  instance.
We explain how to construct $T_{\I}$ in \autoref{subsec:construction}. We prove that if $T_{\I}$ has a forest-ordering, then $\I$ is a satisfiable instance in \autoref{subsec:forest-ordering_implies_True_Assignment}, and finally, we prove that if $\I$ is a satisfiable  instance, then $T_{\I}$ has a forest-ordering in \autoref{subsec:assignment_implies_forest_ordering}.

\section{Definitions and preliminaries}\label{sec:def}

We refer the reader to~\cite{digraphs_book} for snoteAtandard definitions.

An \emph{orientation} of a graph $G$ is the digraph obtained by assigning an orientation to every edge of $G$.
A \emph{tournament} is an orientation of a complete graph.
If $D$ is a digraph, we denote by $V(D)$ and $A(D)$ the set of vertices and arcs of $D$, respectively.
The \emph{transitive tournament} on $n$ vertices is the unique acyclic tournament on $n$ vertices.

An \emph{ordered tournament} is a pair $(T, \prec)$ where $T$ is a tournament and $\prec$ is a total ordering of $V(T)$.
If $uv \in A(T)$ with $v \prec u$ we say that $uv$ is a \emph{back-arc} of $(T, \prec)$ and a \emph{forward-arc} otherwise.
We denote by $T^{\prec}$ the (undirected) graph  with vertex set $V(T)$ and edge set $\{uv \mid v \prec u \text{  and } uv\in A(T)\}$, and call it the \emph{backedge graph} of $T$ with respect to $\prec$.
A \emph{topological-ordering} of $T$ is an ordering of $V(T)$ such that every arc is a forward arc. Note that only transitive tournaments admit a topological ordering.
A \emph{(tree) forest-ordering} of $T$ is an ordering $\prec$ of $V(T)$ such that $T^{\prec}$ is a (tree) forest.

For an ordered tournament $(T, \prec)$ and $X, Y \subseteq V(T)$, we write $X \prec Y$ to say that every vertex of $X$ precedes every vertex of $Y$.
If $X = \{v\}$ we drop the brackets from the notation and simply write $v \prec Y$ or $Y \prec v$.
If $V(T') \prec v$ or $v \prec V(T')$ for a subtournament $T'$ of $T$, we shorten the notation to $T' \prec v$ and $v \prec T'$.

If $x,y$ are vertices of a digraph $D$, we may also write $x \ra y$ to say that there is an arc from $x$ to $y$.
Given two disjoint sets of vertices $X, Y$ of $D$, we write $X \Rightarrow Y$ to say that for every $x \in X$ and for every $y \in Y$, $xy \in A(D)$.
When $X=\{x\}$ or $Y = \{y\}$, we simply write $x \Rightarrow Y$ or $X \Ra \{y\}$, respectively.
We also use the symbol $\Ra$ to denote a composition operation on tournaments: for two tournaments $T_1$ and $T_2$, $T_1\Ra T_2$ is the digraph obtained from the disjoint union of $T_1$ and $T_2$ by adding all arcs from $V(T_1)$ to $V(T_2)$.

For a positive integer $k$, we denote by $[k]$ the set $\{1, \ldots, k\}$.

\medskip

\paragraph{Preliminaries results}
As observed in the introduction, an ordering $\prec$ of a tournament $T$ is a forest-ordering if and only if $T^{\prec}$ is a forest. Throughout the paper, we look at $T^{\prec}$ to decide if $\prec$ is a forest ordering or not.
\medskip

The following simple technical lemma is used several times to prove properties of forest-orderings.

\begin{lemma}\label{lem:tool}
    Let $(T, \prec)$ be an ordered tournament. Let $a,b \in V(T)$, $X \subseteq V(T) \setminus \{a,b\}$ and assume that  $a \Ra X \Ra b$. If $b \prec a$, then each vertex of $X$ is adjacent to $a$ or $b$ in $T^{\prec}$.
\end{lemma}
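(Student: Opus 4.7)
The plan is to argue by contraposition: assume some vertex $x \in X$ is adjacent to neither $a$ nor $b$ in $T^\prec$, and derive a contradiction with the hypothesis $b \prec a$. The heart of the argument is to translate non-adjacency in $T^\prec$ into inequalities on the order $\prec$, exploiting the orientations supplied by $a \Ra X \Ra b$.

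Concretely, since $a \to x$ in $T$, the pair $\{a,x\}$ is an edge of $T^\prec$ precisely when $ax$ is a back-arc, i.e.\ when $x \prec a$; so non-adjacency of $a$ and $x$ in $T^\prec$ forces $a \prec x$. Symmetrically, since $x \to b$ in $T$, the pair $\{x,b\}$ is an edge of $T^\prec$ precisely when $b \prec x$; so non-adjacency of $x$ and $b$ in $T^\prec$ forces $x \prec b$. Chaining the two inequalities yields $a \prec x \prec b$, which contradicts $b \prec a$ by transitivity of the total order $\prec$. I do not anticipate any genuine obstacle: the statement is a direct unpacking of the definition of the backedge graph $T^\prec$ combined with the fact that $\prec$ is a total (hence transitive) order, and the hypothesis $a \Ra X \Ra b$ gives exactly the two arcs in $T$ needed to make the translation go through.
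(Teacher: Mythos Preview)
Your proof is correct and is essentially the contrapositive of the paper's direct argument: the paper splits into the cases $x \prec a$ (giving $ax \in E(T^\prec)$) and $a \prec x$ (whence $b \prec a \prec x$ gives $bx \in E(T^\prec)$), which is the same reasoning you unwind in reverse.
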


\begin{proof}
    Let $x \in X$. If $x \prec a$, then $ax \in E(T^{\prec})$, and if $a \prec x$, then $b \prec x$ and $bx \in E(T^{\prec})$.
\end{proof}

Now, we describe a tournament that admits a unique forest-ordering which, in particular, is a tree-ordering.
This statement was verified by a program implementing the code described in \autoref{section:magical_code}.
\begin{lemma}\label{lem:unique_tree_ordering}
    The tournament depicted in \autoref{figure:magical-tournament}  has a unique forest-ordering, which is the one shown in \autoref{figure:magical-tournament}. This ordering is tree-ordering.
\end{lemma}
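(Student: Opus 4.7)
The tournament in \autoref{figure:magical-tournament} has a specific fixed structure, so the plan is to verify the claim by an exhaustive but heavily pruned search over orderings of its vertex set. I would implement a depth-first search that incrementally builds a partial ordering $v_1 \prec v_2 \prec \cdots \prec v_k$, maintains on the fly the partial backedge graph induced on $\{v_1,\dots,v_k\}$, and backtracks as soon as this partial graph contains a cycle. Since any forest-ordering of an $n$-vertex tournament $T$ produces a backedge graph with at most $n-1$ edges while $\binom{n}{2}$ arcs are competing to become edges, the pruning is extremely aggressive and the search is feasible.

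From the output I would confirm two things: first, that exactly one complete ordering survives, and second, that the resulting backedge graph $T^\prec$ is connected and acyclic, hence a tree. The program described in \autoref{section:magical_code} automates both checks, and this provides the certification announced in the lemma.

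For a more human-readable certificate, \autoref{lem:tool} suggests the following alternative route. For a pair $(a,b)$ together with a large ``sandwich'' set $X$ satisfying $a \Ra X \Ra b$, placing $b \prec a$ forces each $x \in X$ to be adjacent in $T^\prec$ to $a$ or $b$; if $|X|$ is large enough this already exceeds what a forest on $|V(T)|$ vertices can afford at vertices $a,b$, and consequently fixes $a \prec b$. Iterating this elimination with well-chosen pairs propagates enough order constraints to pin down the whole permutation.

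The main obstacle is not conceptual but organizational: choosing the sequence of pairs $(a,b)$ so that each step produces a clean constraint, without requiring nested case analyses, is delicate for a tournament of this size. The design of the gadget presumably makes such a sequence exist, but turning it into prose is laborious, which is precisely why the authors (and this plan) defer the verification to the program of \autoref{section:magical_code}.
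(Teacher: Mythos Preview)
Your proposal is correct and takes essentially the same approach as the paper: the paper offers no written proof of \autoref{lem:unique_tree_ordering} but simply states that the claim was verified by the program in \autoref{section:magical_code}, which brute-forces all $8!$ permutations and checks for each whether the backedge graph is a forest. Your DFS-with-pruning variant is a mild efficiency refinement of the same exhaustive check, and your sketch of a hand-proof via \autoref{lem:tool} is a nice addition that the paper does not attempt.
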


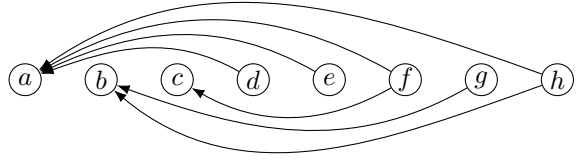
\begin{figure}[h]
    \centering
    \vspace{-1cm}
    \begin{tikzpicture}[scale=1]
        \node[vertex, minimum size=12pt] (a) at (1,0) {$a$};
        \node[vertex, minimum size=12pt] (b) at (2,0) {$b$};
        \node[vertex, minimum size=12pt] (c) at (3,0) {$c$};
        \node[vertex, minimum size=12pt] (d) at (4,0) {$d$};
        \node[vertex, minimum size=12pt] (e) at (5,0) {$e$};
        \node[vertex, minimum size=12pt] (f) at (6,0) {$f$};
        \node[vertex, minimum size=12pt] (g) at (7,0) {$g$};
        \node[vertex, minimum size=12pt] (h) at (8,0) {$h$};

        \draw[arrow] (d) to [out = 150, in = 20] (a);
        \draw[arrow] (e) to [out = 150, in = 25] (a);
        \draw[arrow] (f) to [out = 150, in = 30] (a);
        \draw[arrow] (h) to [out = 160, in = 35] (a);
        \draw[arrow] (g) to [out = 150, in = 20] (b);
        \draw[arrow] (h) to [out = 145, in = 35] (b);
        \draw[arrow, bend right=30] (f) to (c);

        \foreach \i/\j in {a/b, b/c, c/d, d/e, e/f, f/g, g/h}{
                \draw[arrow, very faded] (\i) to (\j);
            }
        \foreach \i in {c,g}{
                \draw[arrow, very faded] (a) to [bend right=30] (\i);
            }
        \foreach \i in {d,e,f}{
                \draw[arrow, very faded] (b) to [bend right=30] (\i);
            }
        \foreach \i in {e,g,h}{
                \draw[arrow, very faded] (c) to [bend right=30] (\i);
            }
        \foreach \i in {f,g,h}{
                \draw[arrow, very faded] (d) to [bend right=30] (\i);
            }
        \foreach \i in {g,h}{
                \draw[arrow, very faded] (e) to [bend right=30] (\i);
            }
        \foreach \i in {h}{
                \draw[arrow, very faded] (f) to [bend right=30] (\i);
            }

    \end{tikzpicture}
    \caption{A tournament on eight vertices admitting a unique forest-ordering (left to right on the figure). Forward arcs are faded.}
    \label{figure:magical-tournament}
\end{figure}

Let $n \geq 1$ and let $T_1$ and $T_2$ be vertex-disjoint transitive tournaments with topological orderings $(u_1, \ldots, u_n)$ and $(v_1, \ldots, v_n)$, respectively.
Let $\prec$ be an ordering of $V(T_1) \cup V(T_2)$ where $u_1 \prec \ldots \prec u_n \prec v_1 \prec \ldots \prec v_n$.
We say that we \emph{add a back-arc matching from $T_2$ to $T_1$ with respect to $\prec$} (or simply \emph{add a back-arc matching from $T_2$ to $T_1$} when $\prec$ is clear from the context) if we add the following set of arcs between $T_1$ and $T_2$:
$$
\{\, v_i u_i \mid i \in [n] \,\} \;\cup\; \{\, u_j v_i \mid i,j \in [n]\ \text{and}\ i \neq j \,\}.
$$
Thus, $T^{\prec}$ consists of a perfect matching from $V(T_2)$ to $V(T_1)$. The \emph{back-arcs} of the back-arc matching are arcs $\{v_i u_i \mid i \in [n]\}$.

See \autoref{figure:backedge-matching} for an example of a back-arc matching.

\begin{figure}[h]
    \centering
    \begin{tikzpicture}[yscale=1]
        \foreach \i in {1,2,3} {
                \node[blackvertex, scale = .7] (u\i) at (\i-1,0) {};
                \node[blackvertex, scale = .7] (v\i) at (3+\i-1,0) {};
            }

        \node[rectangle, draw, fit=(u1)(u3), label = 180:$T_1$] {};
        \node[rectangle, draw, fit=(v1)(v3), label = 0:$T_2$] {};

        \foreach \i in {1,2,3} {
                \draw[arrow, very thick] (v\i) to [bend right = 30] (u\i);
            }

        \draw[arrow, faded] (u1) to [bend right = 30] (v2);
        \draw[arrow, faded] (u1) to [bend right = 30] (v3);
        \draw[arrow, faded] (u2) to [bend right = 30] (v1);
        \draw[arrow, faded] (u2) to [bend right = 30] (v3);
        \draw[arrow, faded] (u3) to [bend right = 30] (v1);
        \draw[arrow, faded] (u3) to [bend right = 30] (v2);

    \end{tikzpicture}
    \caption{A back-arc matching from a transitive tournament $T_2$ to a transitive tournament $T_1$, both on three vertices, and whose topological orderings are left-to-right. The thick arcs are precisely the arcs of $T^{\prec}$, and they form a matching. Arcs inside $T_1$ and $T_2$ are omitted.}
    \label{figure:backedge-matching}
\end{figure}


\section{The reduction}\label{sec:proof}

\subsection{The construction of $T_{\I}$}\label{subsec:construction}

Let $\I$ be an instance  of  $3$-\textsc{SAT} with $n$ variables $x_1, \dots, x_n$ and $k$ clauses $C_1, \dots, C_k$. 
We also consider that for  $i \in [k]$, each clause $C_i$ is formed by an ordered trio of literals.
This naturally implies an ordering $\ell_1$, $\ell_2$, $\ell_3$, $\ldots$, $\ell_{3k-2}$, $\ell_{3k-1}$, $\ell_{3k}$ of the literals as they occur in the clauses, i.e. for $j \in [k]$ we have $C_j = (\ell_{3j-2} \vee \ell_{3j-1} \vee \ell_{3j})$.

This subsection is dedicated to the construction of the tournament $T_{\I}$ from $\mathcal{I}$.
We construct an ordered tournament $(T_{\I}, \prec^*)$ where the order $\prec^*$ is used to simplify the description of $T_{\I}$, as many arcs of $T_{\I}$ are forward-arcs with respect to $\prec^*$. The order $\prec^*$ is also used in \autoref{subsec:assignment_implies_forest_ordering} where, given an assignment of  $\I$, we need to deduce a forest-ordering  of $T_{\I}$.
The forest-ordering is found by slightly perturbing $\prec^*$.

The construction of $(T_{\I}, \prec^*)$ is done in two steps.
We first describe the construction of the ordered tournament $(T_{B}, \prec^*)$, called the \emph{base tournament}, to which gadgets are added later to get $(T_{\I}, \prec^*)$.

%
\subsubsection{The base tournament $(T_{B}, \prec^*)$}\label{sec:base-tournament}

To build $(T_B, \prec^*)$, we start with vertices $v_1, \ov{v}_1, \dots, v_n, \ov{v}_n, \ell_1, \dots, \ell_{3k}$ in this order.  We set $\mathcal V$ $= \{v_1,$ $\ov{v}_1,$ $\ldots,$ $v_n,$ $\ov{v}_n\}$ and $\mathcal L = \{\ell_1, \dots, \ell_{3k}\}$.
For every $x \in \mathcal V \cup \mathcal L$, we add a set of vertices $M_x$\footnote{$M$ stands for Magical, which is the name we gave to the tournament that has a unique forest-ordering} right after $x$ in the ordering, such that $T_{B}[M_x]$ is the tournament depicted in \autoref{figure:magical-tournament}, and all arcs from $x$ to $M_x$.
All non-defined arcs at this point are added as forward-arcs with respect to $\prec^*$.
Thus,
\begin{itemize}
    \item $x \Ra M_x$ for all $x\in \mathcal{V} \cup \mathcal{L}$, and 
    \item $\prec^*$ restricted to $M_x$ is the unique forest-ordering of $M_x$.
    \item all non-defined arcs as forward-arcs.
\end{itemize}

This terminates the construction of $(T_B, \prec^*)$. Each $v_i$, $\ov{v}_i$ represents respectively the variable $x_i$ and its negation, and $\ell_i$ represent the $i^{th}$ literal.

For every $x \in \mathcal V \cup \mathcal L$, the smallest vertex of $T_{B}[M_x]$ (vertex $a$ in \autoref{figure:magical-tournament}) plays a very important role (as we explain below), and is called $\ell_x$.
For every $x \in \mathcal V \cup \mathcal L$, we call $B_x$ the tournament $T_{B}[\{x \cup M_x\}]$ and $\prec_x$ the restriction of $\prec^*$  to $B_x$.
The ordered tournament $(B_x, \prec_x)$ is called a \emph{block}.


\subsubsection{The gadgets}\label{sec:gadgets}
We now add some gadgets to $(T_{B}, \prec^*$), on top of each block $B_x$.
For each $w \in \mathcal{V} \cup \mathcal{L}$, we add to $(T_B, \prec^*)$ a copy of a transitive tournament that is then partitioned in a way that depends on whether $w$ is associated with a variable or with a literal.
For each variable $x_i$ of the given instance $\mathcal{I}$ of $3$-\textsc{SAT}, denote by $\occur(x_i)$ the number of occurrences of the literal $x_i$ and by $\occur(\overline{x}_i)$ the number of occurrences of the literal $\overline{x}_i$.

We remind the reader that every $v \in \mathcal{V}$ is of the form $v_i$ or $\ov{v}_i$. 
If $v = v_i$ for some $i\in [n]$, we denote by $\ov{v}$ the vertex $\ov{v}_i$.
If $v = \ov{v}_i$, we denote by $\ov{v}$ the vertex $v_i$.
Now, for $v \in \mathcal{V}$ we define the gadget $G_v$ associated with $v$ to be the transitive tournament on $7 + 7\occur(\ov{v})$ vertices.
We first partition the vertices of $G_v$ into a set $N_v$ containing the first $2+2\occur(\ov{v})$ vertices, and a set $Y_v$ containing the remaining $5+5\occur(\ov{v})$ vertices.
We then partition $N_v$ (resp. $Y_v$) into $1 + \occur(\ov{v})$ parts of size $2$ (resp. of size $5$) as follows (See \autoref{fig:big_block}):
\[
    N_v=N_v^{\ov{v}} \sqcup N_v^1 \sqcup \dots \sqcup N_v^{\occur(\ov{v})}
    \quad \text{ and } \quad
    Y_v=Y_v^{\ov{v}} \sqcup Y_v^1 \sqcup \dots \sqcup Y_v^{\occur(\ov{v})}
\]

Let us now explain how we define the gadget associated with vertices in $\mathcal L$.
Let $\ell \in \mathcal L$ and assume $\ell$ represents the variable $v_i$ or $\ov{v_i}$ and is part of the clause $C$.
The gadget $G_{\ell}$ associated with $\ell$  is a transitive tournament on $9$ vertices. We partition $G_{\ell}$ into a set $N_{\ell}$ containing the first four vertices of $G_{\ell}$ and a set $Y_{\ell}$ containing the remaining $5$ vertices.  Then, we partition $N_{\ell}$ into two sets of size $2$, $N_{\ell}^{v_i}$ and $N_{\ell}^{C}$ if $\ell$ represents $v_i$, and $N_{\ell}^{\ov{v}_i}$ and $N_{\ell}^{C}$ if $\ell$ represents $\ov{v}_i$.
See  \autoref{fig:big_block_for_literal}.



\subsubsection{Ordering and arcs linking the gadgets and the base tournaments}\label{sec:ordering-arcs-linking-gadgets-base-tournament}

We now describe the orientation of the arcs linking the gadgets with the base tournament, as well as the way the gadgets are introduced in the ordering $\prec^*$.

First, for every $w \in \mathcal V \cup \mathcal L$, we set
\[w \prec^* N_w \prec^* \ell_w \prec^* Y_w \prec^* M_w \setminus \{\ell_w\}\]
and $\prec^*$ restricted to $G_w$ is the topological ordering of $G_w$. This defines the ordering $\prec^*$.
All arcs linking $G_w$ and $B_w$ are forward-arcs with respect to $\prec^*$, except for the arcs between $Y_w$ and $w$ that are all oriented from $Y_w$ to $w$.
See \autoref{fig:big_block} for the case when $w \in \mathcal V$ and \autoref{fig:big_block_for_literal} for the case when $w \in \mathcal L$.

\begin{figure}[h]
    \centering
    \begin{tikzpicture}
        \begin{scope}
            \def \retwidth{4}
            \def \retheight{0.6cm}
            \node[blackvertex, label = -90:{$v$}] (x) at (0,0) {};

            \begin{scope}
                \node[rectangle, draw, minimum width = \retwidth.cm, minimum height = \retheight, label = -90:{$N_v$}] (Rx) at ($(x) + (0.5 + \retwidth/2, 0)$) {};

                \foreach \i in {1/4,2/4,3/4} {
                        \draw ($(Rx) + (-\retwidth/2 + \i*\retwidth, -\retheight/2)$) -- ($(Rx) + (-\retwidth/2 + \i*\retwidth, \retheight/2)$);

                    }

                \node at ($(Rx) + (-\retwidth/2 + 1/8*\retwidth, 0)$) {$N^{\overline{v}}_{v}$};
                \node at ($(Rx) + (-\retwidth/2 + 3/8*\retwidth, 0)$) {$N^{1}_v$};
                \node at ($(Rx) + (-\retwidth/2 + 5/8*\retwidth, 0)$) {$\cdots$};
                \node at ($(Rx) + (-\retwidth/2 + 7/8*\retwidth, 0)$) {$N^{\occur(\overline{v})}_v$};
            \end{scope}

            \node[blackvertex, label = -90:{$\ell_v$}] (lx) at ($(Rx) + (0.5 + \retwidth/2, 0)$) {};

            \begin{scope}
                \node[rectangle, draw, minimum width = \retwidth.cm, minimum height = \retheight, label = -90:{$Y_v$}] (Lx) at ($(lx) + (0.5 + \retwidth/2, 0)$) {};
                \foreach \i in {1/4,2/4,3/4} {
                        \draw ($(Lx) + (-\retwidth/2 + \i*\retwidth, -\retheight/2)$) -- ($(Lx) + (-\retwidth/2 + \i*\retwidth, \retheight/2)$);

                    }
                \node at ($(Lx) + (-\retwidth/2 + 1/8*\retwidth, 0)$) {$Y^{\overline{v}}_v$};
                \node at ($(Lx) + (-\retwidth/2 + 3/8*\retwidth, 0)$) {$Y^{1}_v$};
                \node at ($(Lx) + (-\retwidth/2 + 5/8*\retwidth, 0)$) {$\cdots$};
                \node at ($(Lx) + (-\retwidth/2 + 7/8*\retwidth, 0)$) {$Y^{\occur(\overline{v})}_v$};
            \end{scope}

            \node[rectangle, draw, minimum width = 1cm, minimum height = \retheight] (Mx) at ($(Lx) + (1 + \retwidth/2, 0)$) {$M_v \setminus \ell_v$};

            \draw[double arrow, blue] (Lx) to [out=150, in = 42] (x);
            \draw[arrow] (Mx) to [bend right = 40] (lx);
        \end{scope}
    \end{tikzpicture}
    \caption{The figure represents a vertex $v \in \mathcal V$, $M_v$ and $G_v = N_v \sqcup Y_v$ ordered as in $\prec^*$. Forward-arcs are note drawn.
        The tournament induced by $M_x$ is the same as the one depicted in \autoref{figure:magical-tournament}, and thus the back-arcs in $T[M_v]$ induce a tree. A thick arc represents all arcs in that orientation.}
    \label{fig:big_block}
\end{figure}

\begin{figure}[h]
    \centering
    \begin{tikzpicture}
        \begin{scope}
            \def \retwidth{2}
            \def \retheight{0.6cm}
            \node[blackvertex, label = -90:{$z$}] (x) at (0,0) {};

            \begin{scope}
                \node[rectangle, draw, minimum width = \retwidth.cm, minimum height = \retheight, label = -90:{$N_z$}] (Rx) at ($(x) + (0.5 + \retwidth/2, 0)$) {};

                \foreach \i in {1/2} {
                        \draw ($(Rx) + (-\retwidth/2 + \i*\retwidth, -\retheight/2)$) -- ($(Rx) + (-\retwidth/2 + \i*\retwidth, \retheight/2)$);
                    }

                \node at ($(Rx) + (-\retwidth/2 + 1/4*\retwidth, 0)$) {$N^{\overline{v}_i}_z$};
                \node at ($(Rx) + (-\retwidth/2 + 3/4*\retwidth, 0)$) {$N^{C}_z$};
            \end{scope}

            \node[blackvertex, label = -90:{$\ell_z$}] (lx) at ($(Rx) + (0.5 + \retwidth/2, 0)$) {};

            \begin{scope}
                \node[rectangle, draw, minimum width = \retwidth.cm, minimum height = \retheight, label = -90:{$Y_z$}] (Lx) at ($(lx) + (0.5 + \retwidth/2, 0)$) {};
                \foreach \i in {1/6, 2/6, 3/6, 4/6, 5/6}{
                \node[blackvertex, scale = .7] (y\i) at ($(Lx) + (\i*\retwidth - \retwidth/2, 0)$ ) {};
                }

            \end{scope}

            \node[rectangle, draw, minimum width = 1cm, minimum height = \retheight] (Mx) at ($(Lx) + (1 + \retwidth/2, 0)$) {$M_z \setminus \ell_z$};

            \draw[double arrow, blue] (Lx) to [out=150, in = 42] (x);
            \draw[arrow] (Mx) to [bend right = 40] (lx);
        \end{scope}
    \end{tikzpicture}
    \caption{The figure represents a vertex $z \in \mathcal L$, $M_z$ and $G_z = N_z \sqcup Y_z$ ordered as in $\prec^*$. Forward-arcs are not drawn.
        The tournament induced by $M_z$ is the same as the one depicted in \autoref{figure:magical-tournament}, and thus the back-arcs in $T[M_z]$ induce a tree. A thick arc represents all arcs in that orientation.}
    \label{fig:big_block_for_literal}
\end{figure}

\subsubsection{Arcs linking the gadgets together}\label{section:arcs-linking-gadgets}
We now describe the orientation of the arcs between a gadget and another gadget.
Each vertex $\ell \in \mathcal{L}$ in our construction is associated with a literal $\ell_j$ of the instance $\mathcal{I}$ of $3$-\textsc{SAT}.
If $\ell_j$ is an occurrence of $x_i$ in the literals, then we say that $\ell$ is an \emph{occurrence} of $v_i$.
Otherwise, $\ell_j$ is an occurrence of $\ov{x}_i$, and we say that $\ell$ is an \emph{occurrence} of $\ov{v}_i$.
For the remainder of the text, unless stated otherwise, all the back-arc matchings are added with respect to $\prec^*$. 
\begin{enumerate}
    \item For every $v \in \mathcal V$ and for each occurrence $\ell \in \mathcal L$ of $\ov{v}$, if $\ell$ is the $i^{th}$ occurrence of $\ov{v}$, then we add a back-arc matching from $N_{\ell}^{\ov{v}}$ to $N_v^i$ and from $Y_{\ell}$ to $Y_v^i$. See \autoref{figure:A-gadget-A-link}.
    \item For every $v \in \{v_1, \dots, v_n\}$, we add a back-arc matching from $N_{\ov{v}}^v$ to $N^{\ov{v}}_v$, and from  $Y_{\ov{v}}^v$ to $Y^{\ov{v}}_v$.
    \item For each (ordered) clause $(a \vee b \vee c)$, setting $N_a = a_1 \ra a_2 $, $N_b = b_1 \ra b_2 $, $N_c = c_1 \ra c_2 $, we add the following arcs: $c_2 \ra a_1$, $b_1 \ra a_2$ and $c_1 \ra a_1$. See \autoref{fig:clause}.
    \item All other arcs are forward-arcs.
\end{enumerate}

\begin{figure}[h]
    \centering
    \begin{tikzpicture}[scale=.9]
        \def\retwidth{3}
        \def\retheight{0.6cm}
        \def\Mwidth{1.7cm}
        \begin{scope}

            \node[blackvertex, label = -90:{$v$}] (u) at (0,0) {};

            \node[rectangle, draw, minimum height = \retheight, minimum width = \retwidth.cm] (Nu) at ($(u) + (.5 + \retwidth/2, 0)$) {};
            \foreach \i in {1/4} {
                    \draw ($(Nu) + (-\retwidth/2 + \i*\retwidth, -\retheight/2)$) -- ($(Nu) + (-\retwidth/2 + \i*\retwidth, \retheight/2)$);
                }
            \node[label = {[yshift = -.2cm]-90:$N^{\overline{v}}_v$}] (n1) at ($(Nu) + (-\retwidth/2 + 1/8*\retwidth, 0)$) {};
            \node[label = {[yshift = -.3cm]-90:$\cdots$}] (n2) at ($(Nu) + (-\retwidth/2 + 3/8*\retwidth, 0)$) {};
            \node (n3) at ($(Nu) + (-\retwidth/2 + 5/8*\retwidth, 0)$) {$\cdots$};

            \node[blackvertex, scale = .7] (a11) at ($(n1) + (-.35, 0)$) {};
            \node[blackvertex, scale = .7] (a12) at ($(n1) + (.15, 0)$) {};

        \end{scope}

        \begin{scope}[xshift=4.5cm]
            \node[blackvertex, label = -90:{$\ov{v}$}] (u) at (0,0) {};

            \node[rectangle, draw, minimum height = \retheight, minimum width = \retwidth.cm] (Nu) at ($(u) + (.5 + \retwidth/2, 0)$) {};
            \foreach \i in {1/4, 4/8, 3.2/4} {
                    \draw ($(Nu) + (-\retwidth/2 + \i*\retwidth, -\retheight/2)$) -- ($(Nu) + (-\retwidth/2 + \i*\retwidth, \retheight/2)$);
                }
            \node[label = {[yshift = -.2cm]-90:$N^{v}_{\ov{v}}$}] (n1) at ($(Nu) + (-\retwidth/2 + 1/8*\retwidth, 0)$) {};
            \node[label = {[yshift = -.3cm]-90:$\cdots$}] (n2) at ($(Nu) + (-\retwidth/2 + 3/8*\retwidth, 0)$) {$\cdots$};
            \node[label = {[yshift = -.2cm]-90:$N^{j}_{\ov{v}}$}] (n3) at ($(Nu) + (-\retwidth/2 + 5/8*\retwidth, 0)$) {};
            \node[label = {[yshift = -.3cm]-90:$\cdots$}] (n4) at ($(Nu) + (-\retwidth/2 + 7.4/8*\retwidth, 0)$) {$\cdots$};

            \node[blackvertex, scale = .7] (b11) at ($(n1) + (-.35, 0)$) {};
            \node[blackvertex, scale = .7] (b12) at ($(n1) + (.15, 0)$) {};

            \node[blackvertex, scale = .7] (b31) at ($(n3) + (-.2, 0)$) {};
            \node[blackvertex, scale = .7] (b32) at ($(n3) + (.25, 0)$) {};

        \end{scope}

        \begin{scope}[xshift=9.5cm]
            \node[blackvertex, label = -90:{$\ell$}] (u) at (0,0) {};

            \node[rectangle, draw, minimum height = \retheight, minimum width = 0.8*\retwidth.cm] (Nu) at ($(u) + (.5 + \retwidth/2, 0)$) {};
            \foreach \i in {1/2} {
                    \draw ($(Nu) + (-\retwidth/2 + \i*\retwidth, -\retheight/2)$) -- ($(Nu) + (-\retwidth/2 + \i*\retwidth, \retheight/2)$);
                }
            \node[label = {[yshift = -.2cm]-90:$N^{\overline{v}}_\ell$}] (n1) at ($(Nu) + (-\retwidth/2 + 1/4*\retwidth, 0)$) {};
            \node[label = {[yshift = -.2cm]-90:$N^{C}_\ell$}] (n2) at ($(Nu) + (-\retwidth/2 + 3/4*\retwidth, 0)$) {};

            \node[blackvertex, scale = .7] (c11) at ($(n1) + (-.25, 0)$) {};
            \node[blackvertex, scale = .7] (c12) at ($(n1) + (.25, 0)$) {};

            \node[blackvertex, scale = .7] (c31) at ($(n2) + (-.2, 0)$) {};
            \node[blackvertex, scale = .7] (c32) at ($(n2) + (.25, 0)$) {};

        \end{scope}
        \draw[arrow] (b11) to [bend right = 40] (a11);
        \draw[arrow] (b12) to [bend right = 40] (a12);

        \draw[arrow] (c11) to [bend right = 40] (b31);
        \draw[arrow] (c12) to [bend right = 40] (b32);

        \node at ($(u) + (-.85,0)$) {$\cdots$};

    \end{tikzpicture}
    \caption{Back-arc matchings from $N^{v}_{\ov{v}}$ to $N^{\ov{v}}_v$, where $v,\ov{v} \in \mathcal V$, and from $N^{\ov{v}}_{\ell}$ and $N^{j}_{\ov{v}}$, where $\ell$ is the $j^{th}$ occurrence of $\ov{x}$ and $x$ is the variable associated with the pair $v,\ov{v}$. Vertices are ordered as in $\prec^*$. Non-drawn arcs are all forward-arcs.}
    \label{figure:A-gadget-A-link}
\end{figure}
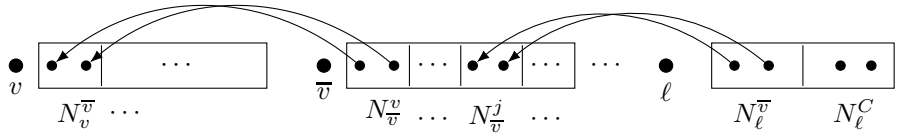

\begin{figure}[h]
    \centering
    \begin{tikzpicture}[scale=.9]
        \def\retheight{0.75cm}
        \def\Mwidth{1cm}
        \def\dist{1.2}
        \begin{scope}[xshift=-4.5cm]
            \node[blackvertex, label = -90:{$a$}] (u) at (0,0) {};

            \node[blackvertex, scale = .7, label = - 90:{$a_1$}] (a1) at ($(u) + (\dist,0)$) {};
            \node[blackvertex, scale = .7, label = - 90:{$a_2$}] (a2) at ($(a1) + (.75,0)$) {};

            \node[rectangle, draw, minimum height = \retheight, fit = (a1)(a2), label = -90:{$N^{C}_{a}$}, yshift = -.1cm] (Nu) {};
        \end{scope}

        \begin{scope}
            \node[blackvertex, label = -90:{$b$}] (u) at (0,0) {};

            \node[blackvertex, scale = .7, label = - 90:{$b_1$}] (b1) at ($(u) + (\dist,0)$) {};
            \node[blackvertex, scale = .7, label = - 90:{$b_2$}] (b2) at ($(b1) + (.75,0)$) {};

            \node[rectangle, draw, minimum height = \retheight, fit = (b1)(b2), label = -90:{$N^{C}_{b}$}, yshift = -.1cm] (Nu) {};
        \end{scope}

        \begin{scope}[xshift=4.5cm]
            \node[blackvertex, label = -90:{$c$}] (u) at (0,0) {};

            \node[blackvertex, scale = .7, label = - 90:{$c_1$}] (c1) at ($(u) + (\dist,0)$) {};
            \node[blackvertex, scale = .7, label = - 90:{$c_2$}] (c2) at ($(c1) + (.75,0)$) {};

            \node[rectangle, draw, minimum height = \retheight, fit = (c1)(c2), label = -90:{$N^{C}_{c}$}, yshift = -.1cm] (Nu) {};
        \end{scope}

        \draw[arrow] (b1) to [bend right = 30] (a2);
        \draw[arrow] (c1) to [bend right = 30] (b2);
        \draw[arrow] (c2) to [bend right = 30] (a1);

    \end{tikzpicture}
    \caption{Arcs linking the gadgets associated with an (ordered) clause $C = (a \vee b \vee c)$.
        Non-drawn arcs are all forward-arcs.}
    \label{fig:clause}
\end{figure}
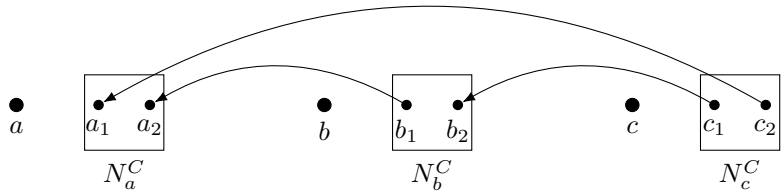

This ends the construction of the ordered tournament $(\mathcal T_{\I}, \prec^*)$. \autoref{figure:construction-of-the-tournament} illustrates the links between the gadgets associated with vertices in $\mathcal{V} \cup \mathcal{L}$.

\medskip

For every $v \in \mathcal{V}$ associated with a variable $x$, we ask that $N_{v}$ and $Y_{v}$ have as many parts of size two and five, respectively, as the number of occurrences of $\ov{x}$ as a literal plus one.
This choice is made to ensure that no two gadgets $G_u, G_{u'}$ with $u,u' \in \mathcal{V} \cup \mathcal{L} \setminus \{v\}$ have back-arc matchings with common endpoints in $G_{v}$.
For otherwise, it could be the case that every backedge graph of $T$ would contain a cycle. Here, the backedge graph induced by the union of all gadgets $G_v$ when $v \in \mc V \cup \mc L$ with respect to $\prec^*$ is a matching.

\begin{figure}[h]
    \begin{tikzpicture}
        \node (V) at (-5, 0) {\large Variables:};

        \foreach \i in {1,2,3} {
                \node[blackvertex,label=$G_{v_\i}$] (v\i) at ($(V) + (3*\i, 0)$) {};
                \node[blackvertex,label=$G_{\overline{v}_\i}$] (vb\i) at ($(v\i) + (1, 0)$) {};
                \node[draw,rectangle, fit = (v\i)(vb\i), label = $x_\i$, xscale = 1.2, minimum height = .75cm, yshift = .2cm] {};
            }

        \node (C) at (-5, -2) {\large Clauses:};

        \node[blackvertex,label={[align=center]-90:$N^{\ov{v}_1}_{\ell_1}$\\ \\$\ell_1 = x_1$}] (t1) at ($(C) + (2, 0)$) {};
        \node[blackvertex,label={[align=center,yshift = -1pt]-90:$N^{v_2}_{\ell_2}$\\[1pt] \\$\ell_2 = \overline{x}_2$}] (t2) at ($(t1) + (1.5, 0)$) {};
        \node[blackvertex,label={[align=center]-90:$N^{\ov{v}_3}_{\ell_3}$\\ \\ $\ell_3 = x_3$}] (t3) at ($(t2) + (1.5, 0)$) {};

        \node[blackvertex,label={[align=center, yshift = -1pt]-90:$N^{v_1}_{\ell_4}$\\[1pt] \\$\ell_4 = \overline{x}_1$}] (t4) at ($(t3) + (2.5, 0)$) {};
        \node[blackvertex,label={[align=center, yshift = -1pt]-90:$N^{v_2}_{\ell_5}$\\[1pt] \\$\ell_5 = \overline{x}_2$}] (t5) at ($(t4) + (1.5, 0)$) {};
        \node[blackvertex,label={[align=center]-90:$N^{\ov{v}_3}_{\ell_6}$\\ \\ $\ell_6 = x_3$}] (t6) at ($(t5) + (1.5, 0)$) {};

        \draw[-, thick] (t1) to node [pos = .5, left, yshift=.1cm] {$N^1_{\overline{v}_1}$} (vb1);
        \draw[-, thick] (t2) to node [pos = .3, sloped, above] {$N^1_{v_2}$} (v2);
        \draw[-, thick] (t3) to node [pos = .35, sloped, above] {$N^1_{\overline{v}_3}$} (vb3);
        \draw[-, thick] (t4) to node [pos = .65, sloped, above] {$N^1_{v_1}$}  (v1);
        \draw[-, thick] (t5) to node [pos = .3, right, yshift=.2cm] {$N^2_{v_2}$} (v2);
        \draw[-, thick] (t6) to node [pos = .5, right] {$N^2_{\overline{v}_3}$} (vb3);

        \draw [decorate,decoration={brace,amplitude=5pt,mirror}]
        ($(t1) + (-.25, -1.5)$) -- ($(t3) + (.25, -1.5)$) node[midway, yshift = -.5cm] {$C_1$};

        \draw [decorate,decoration={brace,amplitude=5pt,mirror}]
        ($(t4) + (-.25, -1.5)$) -- ($(t6) + (.25, -1.5)$) node[midway, yshift = -.5cm] {$C_2$};

    \end{tikzpicture}
    \caption{
        Informal description of the back-arc matchings linking the gadgets of the literals of two clauses, together with the gadgets of the variables these literals represent.
        The clauses are $C_1 = (x_1 \vee \ov{x_2} \vee x_3)$ and $C_2 = (\ov{x}_1 \vee \ov{x}_2 \vee x_3)$.
        For $i \in [3]$, each variable $x_i$ of $\mathcal{I}$ is associated with gadgets $G_{v_i}$ and $G_{\ov{v}_i}$ in $(T_\mathcal{I}, \prec^*)$.
        The edge labelled $N^{1}_{\ov{v}_1}$ represents the back-arc matching from  $N^{\ov{v}_1}_{\ell_1}$ to $G_{\ov{v}_1}$, where $\ell_1$ is the first occurrence of $x_1$.
    }

    \label{figure:construction-of-the-tournament}
\end{figure}

In \autoref{table:notation}, we give a summary of the notations introduced in \autoref{sec:proof}.

\begin{table}[h]
    \centering
    \begin{tabular}{%
        >{\centering\arraybackslash}m{0.25\linewidth}
        >{\raggedright\arraybackslash}m{0.65\linewidth}
        }
        \toprule
        \hyperref[sec:base-tournament]{$(T_B,\prec^*)$}
         & Base (ordered) tournament.                                                  \\
        \midrule
        \hyperref[sec:base-tournament]{$M_x$}
         & For each $x \in \mathcal{V} \cup \mathcal{L}$, vertices inducing a copy
        of the tournament of \autoref{figure:magical-tournament}.                      \\
        \midrule
        \hyperref[sec:base-tournament]{$\ell_x$}
         & For each $x \in \mathcal{V} \cup \mathcal{L}$, $\ell_x$ is the leftest vertex of $M_x$.                      \\
        \midrule
        \hyperref[sec:base-tournament]{$B_x$}
         & For $x \in \mathcal{V} \cup \mathcal{L}$, the tournament $T_B[x \cup M_x]$. \\
        \midrule
        \hyperref[sec:base-tournament]{$(B_x,\prec_x)$}
         & Block associated with $x \in \mathcal{V} \cup \mathcal{L}$.                 \\
        \midrule
        \hyperref[sec:gadgets]{$\occur(x_i$)}\;($\occur(\ov{x}_i)$)
         & The number of occurrences of the literal $x_i$ ($\ov{x}_i$), when $x_i$
        is a variable.                                                                 \\
        \midrule
        \hyperref[sec:gadgets]{$G_w$}
         & Gadget associated with vertex $w \in \mathcal{V} \cup \mathcal{L}$.         \\
        \bottomrule
    \end{tabular}
    \caption{A summary of the notation introduced in \autoref{subsec:construction}.}
    \label{table:notation}
\end{table}

In \autoref{subsec:forest-ordering_implies_True_Assignment}, we are given a forest-ordering $\prec$ of $T_{\I}$ and we need to deduce from it a truth assignment of the variables that satisfy $\mc I$.
This is done by observing the position of each vertex $x \in  \mc V \cup \mc L$ with respect to $\ell_x$.

\autoref{subsec:assignment_implies_forest_ordering}, we are given a satisfiable instance $\I$ and we want to construct a forest-ordering of $T_{\I}$ from it.
This forest-ordering will be obtained from $\prec^*$, by placing the vertices $x$ in $\mc V \cup \mc L$ on the left or on the right of $\ell_x$ depending on the truth assignment received by the variable it represents.


\subsection{Building a satisfying assignment from a forest-ordering}\label{subsec:forest-ordering_implies_True_Assignment}


Let $\I$ be an instance of $3$-\textsc{SAT} (as described at the beginning of \autoref{subsec:construction}) and $(T_{\I}, \prec^*)$ the tournament described in \autoref{subsec:construction}.
The goal of this subsection is to prove that if $T_{\I}$ has a forest-ordering, then $\I$ is a satisfiable  instance.

Given the forest-ordering $\prec$ of $T_{\I}$, we need to deduce from $\prec$ a truth assignment of the variables of $\I$ that satisfies $\mc I$.
This will be done using the position of each vertex $x \in  \mc V \cup \mc L$ with respect to $\ell_x$ as we explain now.

We say that $\prec$ satisfies $\Left_{\prec}(v_i)$  if $v_i \prec \ell_{v_i}$, and $\prec$ satisfies $\Right_{\prec}(v_i)$ if $\ell_{v_i} \prec v_i$. $\Left_{\prec}(v_i)$ is used to assign the  value True  to the variable  $x_i$, and $\Right_{\prec}(v_i)$ to assign the value False.

In order to get a coherent assignment for $\I$ and to be sure that each clause is satisfied, we need  to prove  the following for every forest-ordering $\prec$ of $T_{\I}$:
\begin{itemize}
    \item For every $i \in [n]$, $\prec$ satisfies either $\Left(v_i)$ and $\Right(\overline{v_i})$ or $\Right(v_i)$ and $\Left(\overline{v}_i)$.
    \item For every $i \in [n]$, $\prec$ satisfies $\Left(v_i)$ if and only if $\prec$ satisfies $\Left(\ell)$ for any literal $\ell$ that is an occurrence of $x_i$.
    \item For every $i \in [n]$, $\prec$ satisfies $\Left(\ov{v}_i)$ if and only if $\prec$ satisfies $\Left(\ell)$ for any literal $\ell$ that is an occurrence of $\ov{x}_i$.
    \item Each clause is satisfied. With our notation, this is equivalent with proving that for each (ordered) clause $(x \vee y \vee z)$, the forest-ordering $\prec$ assigns $\Left_{\prec}(x)$ or $\Left_{\prec}(y)$ or $\Left_{\prec}(z)$.
\end{itemize}
The three first items are proved in  \autoref{lem:assignment_coherent}, and the last one in \autoref{lem:clause_sat}.
\smallskip

The next lemma is used to ensure that all the back-arcs of the back arc matchings linking pairs of gadgets appear in the backedge graph of every forest-ordering.
\begin{lemma}\label{prop:back-edgematching}
    In every forest-ordering $\prec$ of $T_{\mathcal  I}$, for every $x,z \in \mc V \cup \mc L$, such that $x \prec z$, we have $N_x \prec N_z$ and $Y_x \prec Y_z$
\end{lemma}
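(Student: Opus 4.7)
The plan is to argue by contradiction: assume $\prec$ is a forest-ordering of $T_{\mc I}$ with $x \prec z$ (for $x, z \in \mc V \cup \mc L$) but $N_x \not\prec N_z$. Pick witnesses $u \in N_x$ and $w \in N_z$ with $w \prec u$. The goal is to exhibit a cycle in $T_{\mc I}^{\prec}$, contradicting the forest property. The case $Y_x \not\prec Y_z$ is handled analogously, and is in fact simpler because the $Y$-pieces have size $5$, so any gross reversal of two $Y$-pieces already produces a dense bipartite substructure in the backedge graph.

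The structural backbone is the observation that between any two distinct gadgets $G_x, G_z$, almost all arcs of $T_{\mc I}$ are forward in $\prec^*$: the only deviations are the back-arc matchings of \autoref{section:arcs-linking-gadgets}, which involve pieces of size $2$ or $5$ and contribute only a bounded number of arcs. Assuming WLOG that $x \prec^* z$, this yields $N_x \Ra N_z$ in $T_{\mc I}$ entirely, except possibly for a single pair of $2$-vertex pieces linked by a back-arc matching. In particular, since $u \to w$ in $T_{\mc I}$ and $w \prec u$, the edge $uw$ is already present in $T_{\mc I}^{\prec}$, and it remains only to produce a second $uw$-path in the backedge graph.

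Next I would apply \autoref{lem:tool} with $a = u$, $b = w$, and $X$ chosen as large as possible subject to $u \Ra X \Ra w$. Good candidates are the $N_x$-vertices topologically after $u$, the $N_z$-vertices topologically before $w$, as well as $Y_x$, $M_x \setminus \{\ell_x\}$, $\{\ell_x\}$ and $\{z\}$, all of which receive arcs from $u$ and send arcs to $w$ by the $\prec^*$-forward structure. The lemma then forces every $v \in X$ to be adjacent to $u$ or $w$ in $T_{\mc I}^{\prec}$. Crucially, any $v \in X$ that happens to fall in the $\prec$-interval strictly between $w$ and $u$ is adjacent to \emph{both} $u$ and $w$, and together with the already-present edge $uw$ this yields a triangle $u{-}v{-}w{-}u$, hence a cycle.

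The main obstacle is ruling out the possibility that the ordering $\prec$ places every vertex of $X$ either entirely before $w$ or entirely after $u$, thereby avoiding the critical middle interval. This is where one must combine the forced edges from \autoref{lem:tool} with the rigid tree-structure on each $M_x$ imposed by \autoref{lem:unique_tree_ordering} and with the $Y_x \Ra x$ arcs from the construction: together these impose enough positional constraints that $X$ cannot be fully squeezed outside $(w, u)$ without already creating a cycle elsewhere. An additional case analysis is required for the specially linked pairs $(x, z)$ of \autoref{section:arcs-linking-gadgets} (items~1--3), where the back-arc matching between pieces of $N_x$ and $N_z$ must be tracked against the forced edges; the saving grace is that the pieces are small, keeping the number of subcases bounded and the bookkeeping finite.
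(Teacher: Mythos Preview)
Your approach takes an unnecessary detour and leaves its central step unresolved. The ``main obstacle'' you identify --- ruling out that all of $X$ lands outside the interval $(w,u)$ --- is a genuine gap in your plan, and the proposal to close it via ``positional constraints'' together with a case analysis over the back-arc-matched pairs of \autoref{section:arcs-linking-gadgets} is too vague to be a proof.

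The paper's argument is far more direct and sidesteps this obstacle entirely. Take $a \in N_x$, $b \in N_z$ with $b \prec a$, and apply \autoref{lem:tool} with $X = M_x$ alone (you already list $M_x$ among your candidates and correctly note $a \Ra M_x \Ra b$). Every vertex of $M_x$ is then adjacent to $a$ or $b$ in $T_{\mc I}^{\prec}$, so by pigeonhole one of $a,b$ has at least two neighbours in $M_x$. Now the decisive point: since $\prec$ restricted to $M_x$ is a forest-ordering of the magic tournament, \autoref{lem:unique_tree_ordering} forces $T_{\mc I}^{\prec}[M_x]$ to be a \emph{tree}, hence connected. A single external vertex with two neighbours in a tree closes a cycle immediately. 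There is no positional case analysis, no need for the edge $uw$ (so the worry about back-arc-matched pairs $u,w$ evaporates), and no special treatment of items~1--3 of \autoref{section:arcs-linking-gadgets}: those matchings never touch $M_x$, so $a \Ra M_x \Ra b$ holds uniformly. The $Y$-case is identical with $X = M_x \setminus \{\ell_x\}$, which still has seven vertices.

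You had every ingredient --- \autoref{lem:tool}, the set $M_x$, and the tree property from \autoref{lem:unique_tree_ordering} --- but deployed the last one as a source of positional rigidity rather than as a connected backedge subgraph into which two extra edges from a single vertex automatically create a cycle.
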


\begin{proof}
    Let $\prec$ be a forest-ordering of $T_{\I}$ and let $x,z  \in \mc V \cup \mc L$ such that $x \prec z$.

    Let $a \in N_x$ and $b \in N_z$, and assume for contradiction that $b \prec a$.
    By construction of $T_{\I}$ we have  $a \Ra M_x \Ra b$ (see \autoref{sec:ordering-arcs-linking-gadgets-base-tournament} and item 4. in the itemization in \autoref{section:arcs-linking-gadgets}).
    Thus, by \autoref{lem:tool}, each vertex of $M_x$ is adjacent to $a$ or $b$ in $\T_{\I}$. This implies that, in $T^{\prec}$,  $a$ or $b$ has at least two neighbours in $M_x$, and since $\T_{\I}[M_x]$ is a tree, we get that $\T_{\I}[M_x \cup \{a,b\}]$ has a cycle, contradiction.

    Now, let $a \in Y_x$ and $b \in Y_z$, and assume for contradiction that $b \prec a$. We have $a \Ra M_x \setminus \{\ell_x\} \Ra b$ and since $b \prec a$, by \autoref{lem:tool}, each vertex of $M_x \setminus \{\ell_x\}$ is adjacent to $a$ or $b$ in $\T_{\I}$. Hence, in $\T_{\I}$, one of $a$ or $b$ has two neighbours in $M_x$, and since $\T_{\I}[M_x]$ is a tree, $\T_{\I}[M_x \cup \{a,b\}]$ has a cycle, a contradiction.
\end{proof}

Recall that for every $v \in  \mc V \cup \mc L$, $B_v = \{v\} \cup M_v$

\begin{lemma} \label{prop:R}
    Let $\prec$ be a forest-ordering of $T_{\I}$.  For every $v \in \mc V \cup \mc L$, if $\prec$ satisfies $R_{\prec}(v)$, then $T_{\I}^{\prec}[B_v \cup N_v]$ is a tree.
\end{lemma}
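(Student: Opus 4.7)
The plan is to build up $T_{\I}^{\prec}[B_v \cup N_v]$ in three stages: first pin down $T_{\I}^{\prec}[M_v]$ using \autoref{lem:unique_tree_ordering}, then attach the single vertex $v$ to get $T_{\I}^{\prec}[B_v]$, and finally show that each element of $N_v$ dangles off $B_v$ as a pendant with no additional edges inside $N_v$. The common tool throughout is that $T_{\I}^{\prec}$ is a forest, so any configuration that would force a cycle is ruled out.

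The restriction of $\prec$ to $M_v$ is itself a forest-ordering of $T_{\I}[M_v]$, so by \autoref{lem:unique_tree_ordering} it must be the unique tree-ordering; hence $T_{\I}^{\prec}[M_v]$ is a tree on eight vertices, and $\ell_v$ is its leftmost element, i.e.\ $\ell_v \prec m$ for every $m \in M_v \setminus \{\ell_v\}$. Now, by construction $v \Ra M_v$ in $T_{\I}$, so the neighbours of $v$ in $M_v$ inside $T_{\I}^{\prec}$ are precisely $\{m \in M_v : m \prec v\}$. The hypothesis $R_{\prec}(v)$ gives $\ell_v \prec v$, hence $v\ell_v$ is an edge of $T_{\I}^{\prec}$. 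If some other $m \in M_v \setminus \{\ell_v\}$ also preceded $v$, then the edges $v\ell_v$ and $vm$ together with the unique $\ell_v$-$m$ path in the tree $T_{\I}^{\prec}[M_v]$ would close a cycle, contradicting that $\prec$ is a forest-ordering. Therefore $v$'s only neighbour in $M_v$ is $\ell_v$, and $T_{\I}^{\prec}[B_v]$ is a tree, obtained from $T_{\I}^{\prec}[M_v]$ by attaching $v$ as a pendant at $\ell_v$.

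Next, fix $u \in N_v$. The only arcs between $u$ and $B_v$ in $T_{\I}$ are $v \to u$ and $u \to m$ for each $m \in M_v$, so in $T_{\I}^{\prec}$ the neighbours of $u$ inside $B_v$ consist of $v$ (when $u \prec v$) together with every $m \in M_v$ such that $m \prec u$. A short case split on the position of $u$ relative to $\ell_v$ and $v$ then shows that $u$ has exactly one neighbour in $B_v$: if $\ell_v \prec u \prec v$, then $u$, $v$, $\ell_v$ close a triangle together with the edge $v\ell_v$; if $v \prec u$ and some $m \in M_v \setminus \{\ell_v\}$ satisfies $m \prec u$, then $u$ has two neighbours in the tree $T_{\I}^{\prec}[M_v]$, producing a cycle; both configurations contradict the forest-ordering property. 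The surviving subcases leave either $u \prec \ell_v$, in which case $v$ is the unique neighbour of $u$ in $B_v$, or $v \prec u$ and $u$ precedes every element of $M_v \setminus \{\ell_v\}$, in which case $\ell_v$ is the unique neighbour.

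The main obstacle, and the final step, is to rule out edges inside $N_v$ itself. Any edge $uu'$ of $T_{\I}^{\prec}[N_v]$ would combine with the single-edge attachments of $u$ and $u'$ to $B_v$ and the unique path between those two attachment points in the tree $T_{\I}^{\prec}[B_v]$ to create a cycle in $T_{\I}^{\prec}$, again contradicting the forest-ordering property. Hence $T_{\I}^{\prec}[N_v]$ is edgeless, and $T_{\I}^{\prec}[B_v \cup N_v]$ is the tree $T_{\I}^{\prec}[B_v]$ with each vertex of $N_v$ attached as a pendant, which is itself a tree.
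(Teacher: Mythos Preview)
Your proof is correct but more laborious than the paper's. Both arguments rely on the fact that $T_{\I}^{\prec}$ is a forest, but you use it only locally---each time an extra edge would close a cycle you derive a contradiction---whereas the paper uses it globally: since any induced subgraph of a forest is a forest, showing that $T_{\I}^{\prec}[B_v \cup N_v]$ is \emph{connected} already proves it is a tree. The paper does exactly this in a few lines: $T_{\I}^{\prec}[M_v]$ is a tree by \autoref{lem:unique_tree_ordering}, the edge $v\ell_v$ attaches $v$ to it, and each $r \in N_v$ is adjacent to $v$ (if $r \prec v$) or to $\ell_v$ (if $v \prec r$, since then $\ell_v \prec r$). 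Your approach instead pins down the exact structure---$v$ is a pendant at $\ell_v$, each $u \in N_v$ is a pendant at $v$ or $\ell_v$, and $N_v$ is edgeless---which buys more information than the lemma requires, at the cost of the case analysis in your second and third paragraphs.
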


\begin{proof}
    Let $v \in \mc V \cup \mc L$ and assume $\prec$ satisfies $R_{\prec}(v)$, i.e. $\ell_v \prec v$. Since $v \ra \ell_v$, we have that  $v\ell_v\in E(T_{\I}^{\prec})$. Moreover, $\T_{\I}[M_v]$ is a tree by Lemma~\ref{lem:unique_tree_ordering}. Hence $\T_{\I}[\{v\} \cup M_v]$ is a tree. Let $r \in V(N_v)$. By construction of $T_{\I}$ (see \autoref{sec:ordering-arcs-linking-gadgets-base-tournament}), $v \Ra N_v \Ra \ell_v$, so in particular $v \ra r \ra \ell_v$.
    If $r \prec v$, then $vr \in E(\T_{\I})$. If $v \prec r$, then $\ell_v \prec r$ and thus $\ell_vr \in E(\T_{\I})$. This implies the lemma.
\end{proof}

As announced, next lemma implies that the truth assignment of $\I$ deduced from a forest-ordering $\prec$ is coherent.

\begin{lemma}\label{lem:assignment_coherent}
    Let $\prec$ be a forest-ordering of $T_{\I}$, let $v \in \mathcal V$ and let $x \in \mc V \cup \mc L \setminus \{v\}$. If $x= \ov{v}$, or $x \in \mathcal L$ is an occurrence of $\ov{v}$, then, one of the following is satisfied:
    \begin{itemize}
        \item $L_{\prec}(v)$ and $R_{\prec}(x)$, or
        \item $R_{\prec}(v)$ and $L_{\prec}(x)$.
    \end{itemize}
\end{lemma}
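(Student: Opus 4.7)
The plan is to argue by contradiction: suppose $\prec$ is a forest-ordering of $T_{\mc I}$ for which neither $L_\prec(v) \wedge R_\prec(x)$ nor $R_\prec(v) \wedge L_\prec(x)$ holds. Then either $R_\prec(v) \wedge R_\prec(x)$ or $L_\prec(v) \wedge L_\prec(x)$ occurs, and in each case I will exhibit a cycle in $T_{\mc I}^\prec$, contradicting the forest-ordering assumption. Throughout, let $A \subseteq N_v$ and $B \subseteq N_x$ denote the two size-$2$ sub-blocks linked by a back-arc matching ($A = N_v^{\ov v}$ and $B = N_{\ov v}^v$ when $x = \ov v$; $A = N_v^i$ and $B = N_x^{\ov v}$ when $x$ is the $i$-th occurrence of $\ov v$), and let $P \subseteq Y_v$, $Q \subseteq Y_x$ be the corresponding size-$5$ sub-blocks linked by a back-arc matching on the $Y$-side.

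In the case $R_\prec(v) \wedge R_\prec(x)$, the arcs $v \to \ell_v$ and $x \to \ell_x$ are back-arcs, so $v\ell_v, x\ell_x \in E(T_{\mc I}^\prec)$. Since $v \Ra N_v \Ra \ell_v$ and $\ell_v \prec v$, \autoref{lem:tool} implies every $r \in A$ is adjacent in $T_{\mc I}^\prec$ to $v$ or $\ell_v$; likewise every $s \in B$ is adjacent to $x$ or $\ell_x$. By \autoref{prop:back-edgematching}, $A$ and $B$ are ordered according to whether $v \prec x$ or $x \prec v$, and consequently either the two matching arcs or the two anti-matching arcs of the back-arc matching become back-edges of $T_{\mc I}^\prec$. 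In both situations, the two resulting cross edges combine with paths inside $\{v, \ell_v\} \cup A$ (routed, if necessary, through the edge $v\ell_v$) and inside $\{x, \ell_x\} \cup B$ (routed through $x\ell_x$) to close up into a cycle on at most $8$ vertices, contradicting the forest property.

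In the case $L_\prec(v) \wedge L_\prec(x)$, applying \autoref{lem:tool} to $\ell_v \Ra Y_v \Ra v$ with $v \prec \ell_v$ shows every $p \in P$ is adjacent in $T_{\mc I}^\prec$ to $v$ or $\ell_v$; similarly every $q \in Q$ is adjacent to $x$ or $\ell_x$. If $x \prec v$, then \autoref{prop:back-edgematching} gives $Q \prec P$, so the $20$ anti-matching arcs between $P$ and $Q$ become back-arcs, producing a copy of $K_{5,5}$ minus a perfect matching inside $T_{\mc I}^\prec$ and hence a $4$-cycle such as $p_1\, q_2\, p_3\, q_4$. If instead $v \prec x$, then $P \prec Q$ and only the five matching arcs contribute back-edges $\{p_i q_i : i \in [5]\}$. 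Labelling $p_i$ by its chosen neighbour $A_i \in \{v, \ell_v\}$ and $q_i$ by its chosen neighbour $B_i \in \{x, \ell_x\}$ yields at most four possible label pairs, so by pigeonhole some indices $i \neq j$ share a pair $(A, B)$; the $6$-cycle $p_i\, A\, p_j\, q_j\, B\, q_i\, p_i$ in $T_{\mc I}^\prec$ is again a contradiction.

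The main obstacle is keeping careful track of which sub-blocks of $N_v$ and $Y_v$ carry the back-arc matching to $x$'s gadget (this differs between the sub-cases $x = \ov v$ and $x$ an occurrence of $\ov v$) and, in the $R_\prec \wedge R_\prec$ case, verifying that the routing through $v\ell_v$ and $x\ell_x$ genuinely produces a simple cycle regardless of how the adjacencies of the elements of $A$ (respectively $B$) happen to coincide.
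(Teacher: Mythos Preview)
Your proof is correct and shares the paper's overall strategy: assume a forest-ordering, rule out the two ``agreeing'' configurations $R_\prec(v)\wedge R_\prec(x)$ and $L_\prec(v)\wedge L_\prec(x)$ by exhibiting a cycle in $T_{\mc I}^\prec$. The execution differs in both cases, however, and the comparison is worth recording.

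In the $R\wedge R$ case, the paper invokes \autoref{prop:R} to conclude that $T_{\mc I}^\prec[B_v\cup N_v]$ and $T_{\mc I}^\prec[B_x\cup N_x]$ are \emph{trees}, and then simply notes that two back-edges of the relevant back-arc matching join these two trees, forcing a cycle. You bypass \autoref{prop:R} entirely and work with the much smaller connected sets $\{v,\ell_v\}\cup A$ and $\{x,\ell_x\}\cup B$, obtained directly from \autoref{lem:tool} together with the edge $v\ell_v$ (resp.\ $x\ell_x$). This is more self-contained but forces you to track whether the two cross-edges are the matching or the anti-matching pair; the paper's version hides this in the blanket statement ``two edges link the two trees''.

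In the $L\wedge L$ case, the paper's argument is a clean edge count: on the $14$ vertices $\{v,\ell_v,x,\ell_x\}\cup P\cup Q$ one finds at least $5+5+5=15$ edges (five from each application of the Claim/\autoref{lem:tool}, five from the back-arc matching), hence a cycle. You instead split on the relative order of $v$ and $x$ and build explicit cycles, via $K_{5,5}$ minus a perfect matching in one sub-case and a pigeonhole $6$-cycle in the other. Both work; the counting argument is shorter and makes transparent why $|P|=|Q|=5$ is exactly the threshold needed, while your construction gives a concrete witness.
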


\begin{proof}
    Assume $x$ is either $\ov{v}$ or is the $i^{th}$ occurrence of $\ov{v}$ for some $i\in [3k]$.

    Assume for contradiction that $R_{\prec}(v)$ and $R_{\prec}(x)$. Assume first that $x = \ov{v}$.
    By \autoref{prop:R}, both $\T_{\I}[B_v \cup N_v]$ and $\T_{\I}[B_{\ov{v}} \cup N_{\ov{v}}]$ are trees.
    By \autoref{prop:back-edgematching}, item 2. in \autoref{section:arcs-linking-gadgets}, and the assumption that $\prec$ is a forest-ordering of $T_{\I}$, there is a perfect matching between $N_v^{\ov{v}}$ and $N_{\ov{v}}^v$ in $\T_{\I}$, i.e. two edges are linking $\T_{\I}[B_v \cup N_v]$ and $\T_{\I}[B_{\ov{v}} \cup N_{\ov{v}}]$ in $\T_{\I}$, and thus $\T_{\I}$ has a cycle, a contradiction.
    If $x = \ell$  is the $i^{th}$ occurrence of $\ov{v}$, then the same reasoning holds by considering $N_v^i$ instead of $N_{\ov{v}}^v$ and $N_{\ell}^v$ instead of $N_{\ov{v}}^v$.
    \smallskip

    Assume now that $L_{\prec}(v)$ and $L_{\prec}(x)$. Assume first that $x = \ov{v}$, so we have both $v \prec \ell_v$ and $\ov{v} \prec \ell_{\ov{v}}$.

    \begin{claim}
        Let $z \in \{v, \ov{v}\}$. For every $a \in Y_z$, either $av \in E(T^{\prec})$, or $a\ell_v \in E(T^{\prec})$
    \end{claim}

    \begin{subproof}
        Recall that, by construction of $T_{\I}$, we have $Y_z \Ra z \ra  \ell_z \Ra Y_z$ (see \autoref{fig:big_block}).
        Let $a \in Y_z$. Since  $z \prec \ell_z$ and $a \ra z \ra \ell_z \ra a$, if $z \prec a$, then $za \in E(\T)$, and if $a \prec z$, then $a \prec \ell_z$ and $a\ell_z \in E(\T)$.
    \end{subproof}

    Recall that $Y_v^{\ov{v}}$ and $Y_{\ov{v}}^v$ both have $5$ vertices.
    Now consider $H$ the subgraph of $\T$ induced by $\{v, \ell_v,$ $\ov{v}, \ell_{\ov{v}}\} \cup Y_v^{\ov{v}} \cup Y_{\ov{v}}^v$.
    By \autoref{prop:back-edgematching}, there is a perfect matching between  $Y_v^{\ov{v}}$ and $Y_{\ov{v}}^v$ in $\T$ and thus in $H$. Now, together with the claim, we get that $H$ has at least $10 + 5= 15$ edges, while it has only $14$ vertices. Hence $H$ has a cycle, a contradiction.
    This proved the case where $x= \ov{v}$, in the case where $x= \ell$  is the $i^{th}$ occurrence of $\ov{v}$, the same reasoning holds
    by considering $Y_v^i$ instead of $Y_{\ov{v}}^v$ and $Y_{\ell}^v$ instead of $Y_{\ov{v}}^{v}$.
\end{proof}

The last Lemma implies that all clauses are satified by the truth assignment of $\I$ deduced from a forest-ordering of $T_{\I}$.
\begin{lemma}\label{lem:clause_sat}
    Let $\prec$ be a forest-ordering of $T_{\I}$ and let $v \in \mathcal V$ and let $(a \vee b \vee c)$ be an (ordered) clause. Then at least one of $L(x), L(y), L(z)$ holds.
\end{lemma}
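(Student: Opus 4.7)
The plan is to argue by contradiction. Suppose that $R_{\prec}(a)$, $R_{\prec}(b)$ and $R_{\prec}(c)$ all hold; I will exhibit a cycle in $T_{\I}^{\prec}$, contradicting the assumption that $\prec$ is a forest-ordering.

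First, by \autoref{prop:R} applied to each of $a, b, c$, the three subgraphs $T_{\I}^{\prec}[B_v \cup N_v]$, for $v \in \{a,b,c\}$, are trees. Since the vertex sets $B_a \cup N_a$, $B_b \cup N_b$, $B_c \cup N_c$ are pairwise disjoint (the blocks and their associated gadgets do not intersect), the subgraph $H$ of $T_{\I}^{\prec}$ induced on their union already contains $|V(H)| - 3$ edges coming from these three trees. Producing three additional edges of $H$ — necessarily connecting two distinct trees — would give $|E(H)| \geq |V(H)|$, forcing a cycle in $H \subseteq T_{\I}^{\prec}$ and providing the desired contradiction.

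These extra edges will come from arcs between $N_a$, $N_b$ and $N_c$. From the construction in \autoref{section:arcs-linking-gadgets}, the only back-arcs of $\prec^*$ between these sets are the three clause arcs $b_1 \to a_2$, $c_2 \to a_1$ and $c_1 \to a_1$; every other arc between two different $N_v$'s is a forward-arc with respect to $\prec^*$. By \autoref{prop:back-edgematching}, the relative order of $N_a, N_b, N_c$ in $\prec$ coincides with the relative order of $a, b, c$, so once this order is fixed, the direction of every arc between these sets in $\prec$ is determined: for each pair $\{N_v, N_{v'}\}$, either the $\prec^*$-order is preserved and only the listed back-arcs between that pair become back-edges of $T_{\I}^{\prec}$, or the order is reversed and all other arcs between that pair flip into back-edges.

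The remaining step is a short case analysis over the six orderings of $\{a,b,c\}$ in $\prec$. Listing the contributions of each pair $(N_a,N_b)$, $(N_a,N_c)$, $(N_b,N_c)$ and summing, the minimum total number of back-edges between different $N_v$'s is $3 = 1 + 2 + 0$, attained for the ordering $a \prec b \prec c$ (the three listed clause arcs themselves); in every other ordering at least one pair has its $\prec^*$-order reversed and an entire bundle of forward arcs flips into back-edges, so the count is strictly larger. The main obstacle is merely this mechanical bookkeeping; the conceptual work is packaged by \autoref{prop:R} and \autoref{prop:back-edgematching}, and once the three additional edges are secured, $|E(H)| \geq |V(H)|$ immediately yields the cycle in $T_{\I}^{\prec}$.
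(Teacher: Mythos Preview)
Your argument is correct and follows the paper's approach: assume $R_\prec(a)$, $R_\prec(b)$, $R_\prec(c)$, apply \autoref{prop:R} to obtain three vertex-disjoint trees on $B_a\cup N_a$, $B_b\cup N_b$, $B_c\cup N_c$, and then exhibit three further back-edges among the clause arcs linking the $N^C$-parts so that $|E(H)|\ge|V(H)|$ forces a cycle. Your six-case check is in fact more careful than the paper's own proof, which simply asserts via \autoref{prop:back-edgematching} that $b_1a_2$, $c_1b_2$, $c_2a_1$ lie in $E(T_{\I}^{\prec})$ without discussing the relative order of $a,b,c$; note only that you inherited a typo from the construction text---the third clause arc is $c_1\to b_2$ rather than $c_1\to a_1$ (see \autoref{fig:clause} and the paper's proof)---though your counting still yields at least three cross-edges in every ordering either way.
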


\begin{proof}
    Set $N_a = a_1 \ra a_2 $, $N_b = b_1 \ra b_2 $, $N_c = c_1 \ra c_2 $. Recall that we have:
    $b_1 \ra a_2$, $c_1 \ra b_2$ and $c_2 \ra a_1$ (See \autoref{fig:clause}). By \autoref{prop:back-edgematching}, edges $b_1a_2, c_1b_2$ and $c_2a_1$ are in $E(\T_{\I})$.
    Moreover, by \autoref{prop:R}, $\T_{\I}[B_a \cup N_a]$, $\T_{\I}[B_b \cup N_b]$ and $\T_{\I}[B_c \cup N_c]$ are trees. This, together with  edges  $b_1a_2, c_1b_2$ and $c_2a_1$, implies that $\T$ has a cycle, contradiction.
\end{proof}


\subsection{Building a forest-ordering from a satisfying assignment}\label{subsec:assignment_implies_forest_ordering}
For an integer $k \geq 1$, we say that an undirected graph $G$ is \emph{$k$-degenerate} if every subgraph of $G$ has a vertex of degree at most $k$. Observe that a graph is a forest if and only if it is $1$-degenerate.

This section is dedicated to the proof that, given a satisfiable instance $\mc I$ of $3$-\textsc{SAT}, $T_{\I}$ admits a forest-ordering.
Let $\nu$ be a variable assignment satisfying $\I$.
The ordered tournament $(T_{\I}, \prec^{*})$ starts with every $x \in \mathcal{V} \cup \mathcal{L}$ assigned $\Left(x)$, and we show how to use $\nu$ to decide which $x \in \mathcal{V} \cup \mathcal{L}$ should be assigned $\Right(x)$ to build a forest-ordering $\prec$ for $T_{\I}$ starting from $\prec^{*}$, as indicated in \autoref{subsec:construction}.

We set
$$\mathcal{V}_{True} = \{v_i \in \mathcal{V} \mid \nu(x_i) = True\} \cup \{\ov{v}_i \in \mathcal{V} \mid \nu(x_i) = False\}$$
and $\mc V_{False} = \mc V \setminus \mc V_{True}$. We define $\mc L_{True}$ and $\mc L_{False}$ similarly.

Then we define the order $\prec$ of $T$  from $\prec^*$ by moving each vertex $x \in \mc V_{False} \cup \mc L_{False}$ right after $Y_x$.
\autoref{figure:case-vertex-to-the-right} describes the ordering $\prec$ restricted to $\{x\} \sqcup G_x \sqcup M_x\}$ when $x \in \mathcal{V}_{False} \cup  \mathcal{L}_{False}$.
When $x \in \mathcal{V}_{True} \cup  \mathcal{L}_{True}$, the ordering of $\{\{x\} \sqcup G_x \sqcup M_x\}$ is the same as $\prec^*$ and is shown in  \autoref{fig:big_block}.

\begin{figure}[h]
    \centering
    \begin{tikzpicture}
        \begin{scope}
            \def \retwidth{4}
            \def \retheight{0.6cm}

            \begin{scope}
                \node[rectangle, draw, minimum width = \retwidth.cm, minimum height = \retheight, label = -90:{$N_x$}] (Rx) at (0,0) {};

                \foreach \i in {1/4,2/4,3/4} {
                        \draw ($(Rx) + (-\retwidth/2 + \i*\retwidth, -\retheight/2)$) -- ($(Rx) + (-\retwidth/2 + \i*\retwidth, \retheight/2)$);
                    }

                \node at ($(Rx) + (-\retwidth/2 + 1/8*\retwidth, 0)$) {$N^{\overline{x}}_x$};
                \node at ($(Rx) + (-\retwidth/2 + 3/8*\retwidth, 0)$) {$N^{1}_x$};
                \node at ($(Rx) + (-\retwidth/2 + 5/8*\retwidth, 0)$) {$\cdots$};
                \node at ($(Rx) + (-\retwidth/2 + 7/8*\retwidth, 0)$) {$N^{\occur(\overline{x})}_x$};
            \end{scope}

            \node[blackvertex, label = -90:{$\ell_x$}] (lx) at ($(Rx) + (0.5 + \retwidth/2, 0)$) {};

            \begin{scope}
                \node[rectangle, draw, minimum width = \retwidth.cm, minimum height = \retheight, label = -90:{$Y_x$}] (Lx) at ($(lx) + (0.5 + \retwidth/2, 0)$) {};
                \foreach \i in {1/4,2/4,3/4} {
                        \draw ($(Lx) + (-\retwidth/2 + \i*\retwidth, -\retheight/2)$) -- ($(Lx) + (-\retwidth/2 + \i*\retwidth, \retheight/2)$);

                    }
                \node at ($(Lx) + (-\retwidth/2 + 1/8*\retwidth, 0)$) {$Y^{\overline{x}}_x$};
                \node at ($(Lx) + (-\retwidth/2 + 3/8*\retwidth, 0)$) {$Y^{1}_x$};
                \node at ($(Lx) + (-\retwidth/2 + 5/8*\retwidth, 0)$) {$\cdots$};
                \node at ($(Lx) + (-\retwidth/2 + 7/8*\retwidth, 0)$) {$Y^{\occur(\overline{x})}_x$};
            \end{scope}

            \node[blackvertex, label = -90:{$x$}] (x) at ($(Lx) + (0.5 + \retwidth/2, 0)$) {};

            \node[rectangle, draw, minimum width = 1cm, minimum height = \retheight] (Mx) at ($(x) + (1, 0)$) {$M_x \setminus \ell_x$};

            \draw[arrow] (x) to [bend right = 36] (lx);
            \draw[arrow] (Mx) to [in = 60, out = 140] (lx);
            \draw[double arrow, blue] (x) to [out = 130, in = 40] (Rx);
        \end{scope}
    \end{tikzpicture}
    \caption{The figure represents a vertex $v \in \mathcal V$, $M_v$ and $G_v = N_v \sqcup Y_v$ ordered as the ordering $\prec$ defined in \autoref{subsec:assignment_implies_forest_ordering} when $v \in \mc V_{False} \cup \mc L_{False}$. Forward-arcs are not drawn. A thick arc represent all arcs in that orientation.}
    \label{figure:case-vertex-to-the-right}
\end{figure}

Hence, if $x \in \mc V_{True} \cup \mc L_{True}$,  then the only edges of $\T_{\I}[\{x\} \sqcup G_x \sqcup M_x]$ are the edges linking each vertex of $Y_x$ with $x$ (that is back-arcs in \autoref{fig:big_block}) and if $x \in \mc V_{False} \cup \mc L_{False}$, then the only edges of $\T[\{x\} \sqcup G_x \sqcup M_x]$ are the edges linking each vertex of $N_x$ with $x$ (that is back-arcs in \autoref{figure:case-vertex-to-the-right}).

We are going to prove that $\prec$ is a forest-ordering of $T_{\I}$ i.e., that $T_{\I}^{\prec}$ is a forest. More precisely, we are going to prove that $\T_{\I}$  is $1$-degenerate  by iteratively peeling off  vertices of degree at most $1$ in $\T_{\I}$.
\medskip

For every $x \in \mathcal V \cup \mathcal L$, $\T_{\I}[M_x]$ is a tree and vertices in $M_x \setminus \{\ell_x\}$ have no neighbour in $\T$ outside $M_x$. We can thus peel off every vertex of $M_x \setminus \{\ell_x\}$.

Let $x \in \mc V \cup \mc L$ and consider $\ell_x$. If $x \in \mc V_{True} \cup \mc L_{True}$, then $\ell_x$ has degree $0$ (see \autoref{fig:big_block}), and if $x \in \mc V_{False} \cup \mc L_{False}$, then the only remaining neighbour of $\ell_x$ in $\T$ is $x$ (see \autoref{figure:case-vertex-to-the-right}). We can thus also peel off $\ell_x$.

Let $T_1$ be the tournament obtained from $T$ after removing the union of all $M_x$ for $x \in \mc V \cup \mc L$.
In $\T_1$ the vertices in the following sets have degree one:
\begin{itemize}
    \item $N_x$ when $x \in \mc V_{True} \cup \mc L_{True}$
    \item $Y_x$ when $x \in \mc V_{False} \cup \mc L_{False}$
\end{itemize}

Indeed, when $x \in \mc V_{True} \cup \mc L_{True}$, vertices in $N_x$ have degree $0$ in $\T[B_x \cup M_x]$ (see Figures~\ref{fig:big_block} and \ref{fig:big_block_for_literal}), and thus their only neighbours are through the back arcs of the back-arc matching linking the gadgets.
Similarly, when $x \in \mc V_{False} \cup \mc L_{False}$, vertices in $Y_x$ have degree $0$ in $\T[B_x \cup M_x]$ (see Figures~\ref{figure:case-vertex-to-the-right} and~\ref{figure:case-vertex-to-the-right-literal}), and thus their only neighbours are through the back arcs of the back-arc matching linking the gadgets.

\begin{figure}[h]
    \centering
    \begin{tikzpicture}
        \begin{scope}
            \def \retwidth{2}
            \def \retheight{0.6cm}

            \begin{scope}
                \node[rectangle, draw, minimum width = \retwidth.cm, minimum height = \retheight, label = -90:{$N_z$}] (Rx) at (0,0) {};

                \foreach \i in {1/2} {
                        \draw ($(Rx) + (-\retwidth/2 + \i*\retwidth, -\retheight/2)$) -- ($(Rx) + (-\retwidth/2 + \i*\retwidth, \retheight/2)$);
                    }

                \node at ($(Rx) + (-\retwidth/2 + 1/4*\retwidth, 0)$) {$N^{\overline{v}_i}_z$};
                \node at ($(Rx) + (-\retwidth/2 + 3/4*\retwidth, 0)$) {$N^{C}_z$};
            \end{scope}

            \node[blackvertex, label = -90:{$\ell_z$}] (lx) at ($(Rx) + (0.5 + \retwidth/2, 0)$) {};

            \begin{scope}
                \node[rectangle, draw, minimum width = \retwidth.cm, minimum height = \retheight, label = -90:{$Y_z$}] (Lx) at ($(lx) + (0.5 + \retwidth/2, 0)$) {};
                \foreach \i in {1/6, 2/6, 3/6, 4/6, 5/6}{
                \node[blackvertex, scale = .7] (y\i) at ($(Lx) + (\i*\retwidth - \retwidth/2, 0)$ ) {};
                }

            \end{scope}

            \node[blackvertex, label = -90:{$z$}] (x) at ($(Lx) + (0.75+\retwidth/2, 0)$) {};

            \node[rectangle, draw, minimum width = 1cm, minimum height = \retheight] (Mx) at ($(x) + (0.5 + \retwidth/2, 0)$) {$M_z \setminus \ell_z$};

            \draw[arrow] (Mx) to [bend right = 40] (lx);
            \draw[arrow] (x) to [bend right = 40] (lx);
            \draw[double arrow, blue] (x) to [bend right = 40] (Rx);
        \end{scope}
    \end{tikzpicture}
    \caption{The figure represents a vertex $z \in \mathcal V$, $M_z$ and $G_z = N_z \sqcup Y_z$ ordered as the ordering $\prec$ defined in \autoref{subsec:assignment_implies_forest_ordering}. Forward-arcs are not drawn. A thick arc represent all arcs in that orientation.}
    \label{figure:case-vertex-to-the-right-literal}
\end{figure}
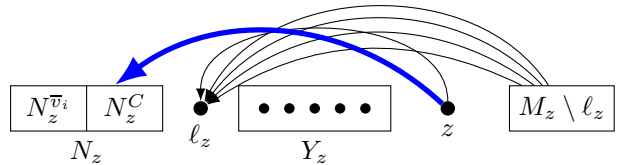

For $v \in \mathcal{V} \cup \mathcal{L}$, we may refer to $Y_v$ as the \emph{$Y$-gadget} of $v$.
Let $T_2$ be the tournament obtained after removing the above sets of vertices. In $\T_2$ the vertices in the following sets have degree one:
\begin{itemize}
    \item $Y_x$ when $x \in \mc V_{True} \cup \mc L_{True}$ (their only neighbours is $x$)
    \item $N_v$ for $v \in \mc V_{False}$.
    \item $N_{\ell}^{var(\ell)}$ for $\ell \in \mc L_{False}$, where $var(\ell)$ is the variable corresponding to $\ell$.
\end{itemize}
The first item holds because the neighbours of vertices in $Y_x$ are $x$ and the end-vertices of the back arcs of the back-arc matchings linking them with the $Y$-gadgets associated with (some of the) vertices in $\mc V_{False} \cup \mc L_{False}$, which already have been removed.
The two other items hold for the similar reasons.

After deleting all these sets, the vertices in $\mc V \cup \mc L_{True}$ have degree $1$, we can thus remove them. We call $T_3$ the tournament induced by the remaining vertices.

$T_3$ has the following vertices:
$\mc L_{False}$ and for each $\ell \in \mc L_{False}$, $N_{\ell}^{C_{\ell}}$ where $C_{\ell}$ is the clause containing $\ell$.
Let $C=(x,y,z)$ be a clause. At least one of $x,y,z$ is in $\mc L_{True}$, say $z$. If both $x$ and $y$ are False, we have $\T_3[\{x,y\} \cup N_x^{C} \cup N_y^{C}]$ is a path (see \autoref{fig:clause-false-false-true}) and a connected component of $\T_3$ and thus we are done. If $x$ or $y$ are True, the same reasoning holds.

\begin{figure}[hbt!]
    \centering
    \begin{tikzpicture}[scale=.9]
        \def\retheight{0.75cm}
        \def\Mwidth{1cm}
        \def\dist{1.2}
        \begin{scope}[xshift=-4.5cm]

            \node[blackvertex, scale = .7, label = - 90:{$x_1$}] (a1) at (0,0) {};
            \node[blackvertex, scale = .7, label = - 90:{$x_2$}] (a2) at ($(a1) + (.75,0)$) {};

            \node[rectangle, draw, minimum height = \retheight, fit = (a1)(a2), label = -90:{$N^{C}_{x}$}, yshift = -.1cm] (Nu) {};

            \node[blackvertex, label = -90:{$x$}] (u) at ($(Nu) + (0.5+\dist, 0)$) {};
            \draw[arrow] (u) to [bend right = 80] (a1);
            \draw[arrow] (u) to [bend right = 80] (a2);
        \end{scope}

        \begin{scope}

            \node[blackvertex, scale = .7, label = - 90:{$y_1$}] (b1) at (0,0) {};
            \node[blackvertex, scale = .7, label = - 90:{$y_2$}] (b2) at ($(b1) + (.75,0)$) {};

            \node[rectangle, draw, minimum height = \retheight, fit = (b1)(b2), label = -90:{$N^{C}_{y}$}, yshift = -.1cm] (Nu) {};

            \node[blackvertex, label = -90:{$y$}] (u) at ($(Nu) + (0.5+\dist, 0)$) {};
            \draw[arrow] (u) to [bend right = 80] (b1);
            \draw[arrow] (u) to [bend right = 80] (b2);
        \end{scope}

        \begin{scope}[xshift=4.5cm]
            \node[blackvertex, label = -90:{$z$}] (u) at (0,0) {};

            \node[blackvertex, scale = .7, label = - 90:{$z_1$}] (c1) at ($(u) + (\dist,0)$) {};
            \node[blackvertex, scale = .7, label = - 90:{$z_2$}] (c2) at ($(c1) + (.75,0)$) {};

            \node[rectangle, draw, minimum height = \retheight, fit = (c1)(c2), label = -90:{$N^{C}_{z}$}, yshift = -.1cm] (Nu) {};
        \end{scope}

        \draw[arrow] (b1) to [bend right = 30] (a2);
        \draw[arrow] (c1) to [bend right = 30] (b2);
        \draw[arrow] (c2) to [bend right = 30] (a1);

    \end{tikzpicture}
    \caption{Back-arcs associated with ordering $\prec$ defined in \autoref{subsec:assignment_implies_forest_ordering} with an ordered clause $C = (x,y,z)$ in $T_3$, where $x,y \in  \mathcal{L}_{False}$ and $z \in \mathcal{L}_{True}$.}
    \label{fig:clause-false-false-true}
\end{figure}
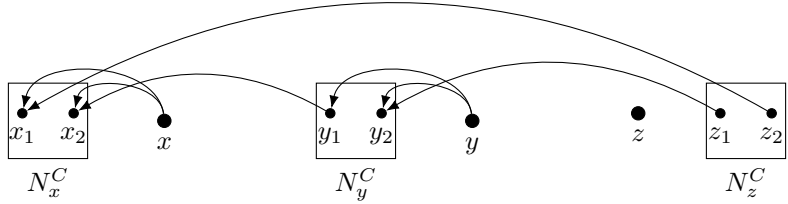


\vspace{-0.2cm}

\section{Conclusion}

We think the $\mathcal C$-\textsc{FAS} Problem is interesting, and quite natural, in particular because of its links with the dichromatic number and the clique number of tournaments. It is also a way to define classes of tournament (given an undirected class of graphs $\mc C$, a $\mc C$-tournaments is a tournament $T$ that admits $\mc C$-FAS) that has been often used recently (see for example Section 4 in~\cite{NGUYEN2025146} where many such classes are looked at).
\medskip

As explained in the introduction, when $\mathcal C_k$ is the class of graphs with clique number at most $k$, the $\mathcal C_k$-\textsc{FAS} problem has been recently proved to be $NP$-complete when $k \geq 4$, is easily seen to be polynomial when $k \leq 2$, but its complexity is open when $k=3$.

\begin{problem}[\cite{A24}]
Given a tournament $T$, what is the complexity of deciding if $T$ has a triangle-free FAS?
\end{problem}

On the same flavour, Aboulker et al.\cite{AACT24} proved that an approximation version of the above problem is polynomial.
More precisely, they prove that there is a constant $c$ and a polynomial-time algorithm that, given a tournament $T$, correctly concludes that $\diomega(T) \geq 3$ or finds an order $\prec$ of $V(T)$ such that $\omega(T^\prec) \leq c$.
It is thus natural to conjecture the following.


\begin{conjecture}
    There is a function $f$ such that for every integer $k$, there is a polynomial-time algorithm that, given a tournament $T$, correctly concludes that $\diomega(T) \geq k$, or finds an order $\prec$ of $V(T)$ such that $\omega(T^\prec) \leq f(k)$
\end{conjecture}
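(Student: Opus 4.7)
The conjecture is open, so this is a research plan rather than a complete proof. The plan is to generalise the base case $k=3$ from~\cite{AACT24} by induction on $k$, maintaining a family of polynomial-time procedures $\mc A_k$ that, on input $T$, either return an ordering $\prec$ with $\omega(T^\prec) \le f(k)$ or output a structural certificate that $\diomega(T) \ge k$.

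First I would attempt the inductive step by running $\mc A_k$ as an oracle. If $\mc A_k$ produces an ordering with clique number at most $f(k)$, we simply output $\diomega(T) \le f(k+1)$, provided $f$ is monotone. Otherwise $\mc A_k$ returns a witness $W$ for $\diomega(T) \ge k$, and the real work is to leverage it. The natural move is to probe the interaction of $W$ with the rest of $T$: examine large homogeneous sections across $W$, or recurse $\mc A_k$ on in- and out-neighbourhoods of carefully chosen vertices. If one can repeatedly produce pairwise compatible witnesses that combine, via the composition $T_1 \Ra T_2$ recalled in~\autoref{sec:def}, into a certificate of $\diomega(T) \ge k+1$, we are done. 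Otherwise the failure of such a combination should yield, through a Ramsey-type argument, an ordering of bounded backedge clique number, and one would then set $f(k+1)$ to be a Ramsey-type function of $f(k)$ and $k$.

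The main obstacle is this combining step: no known dichotomy asserts that a tournament with $\diomega(T) \ge k$ admits either a combinatorial upgrade to $\diomega(T) \ge k+1$ or an ordering whose backedge clique number is bounded in terms of $k$ alone. Proving such a dichotomy likely demands a new structural theorem for tournaments of bounded clique number, in the spirit of the decomposition theorems underpinning the tournament Erd\H{o}s-Hajnal programme mentioned in the introduction. An orthogonal avenue would be to relax $\min_\prec \omega(T^\prec)$ into a convex program and bound its integrality gap purely in terms of $k$; the hard part there would be designing a rounding scheme that actually returns an explicit ordering achieving the promised clique number. Progress on the open case $k=2$ (triangle-free FAS) would almost certainly indicate which of these routes is viable.
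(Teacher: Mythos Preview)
The statement is a \emph{conjecture}: the paper states it as an open problem in~\autoref{sec:conclusion} and offers no proof whatsoever, only the motivating remark that the case $k=3$ is settled in~\cite{AACT24}. You correctly identify this and present a research plan rather than a proof, so there is nothing in the paper to compare your approach against.

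As for the plan itself, you are candid about where it breaks: the inductive step needs a dichotomy saying that a witness for $\diomega(T)\ge k$ either upgrades to a witness for $\diomega(T)\ge k+1$ or yields an ordering with backedge clique number bounded in $k$, and no such statement is known. One small wrinkle worth flagging: your oracle $\mc A_k$ is assumed to return a \emph{structural certificate} when it reports $\diomega(T)\ge k$, but the result of~\cite{AACT24} for $k=3$ is only stated as a decision procedure in the paper; whether it produces a usable combinatorial witness (and of what shape) would have to be checked before the induction can even get off the ground. Beyond that, your assessment of the obstacles is accurate and matches the state of the problem as the paper presents it.
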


In~\cite{AACL23}, it is proved that a tournament has a forest-ordering if and only if it has a tree-ordering. (For the curious reader, the proof is easy:  simply start from a forest-ordering, runs through vertices from left to right, and when two consecutive vertices $x \prec y$ are in distinct connected component, just switch the ordering of $x$ and $y$). Together with \autoref{thm:main}, it implies the following:
\begin{theorem}
    The $\mc C$-\textsc{FAS} problem is NP-complete when $\mc C$ is the set of all trees.
\end{theorem}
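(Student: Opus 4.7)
The plan is to derive this result as a direct corollary of \autoref{thm:main} together with the equivalence between forest-orderings and tree-orderings recalled just above (from~\cite{AACL23}). Since the reduction from $3$-\textsc{SAT} has already been constructed in \autoref{sec:proof}, essentially no new work is needed: one only has to verify that the property ``$T$ admits a tree-FAS'' is the same as ``$T$ admits a forest-FAS''. Once this is done, the tournament $T_{\I}$ built in \autoref{subsec:construction} serves unchanged as a reduction for the tree case.

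First I would dispatch NP-membership: a polynomial-size certificate is any total order $\prec$ of $V(T)$, and checking in polynomial time that $T^{\prec}$ is a tree is straightforward (compute the backedge graph and verify connectedness and acyclicity).

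Next, I would establish the key equivalence: a tournament admits a forest-FAS (resp.\ tree-FAS) if and only if it admits a forest-ordering (resp.\ tree-ordering). In one direction, if $F$ is a FAS, then $T \setminus F$ is acyclic, so it has a topological ordering $\prec$; with respect to $\prec$ every back-arc belongs to $F$, that is $T^{\prec} \subseteq F$. Hence if $F$ is a forest, so is $T^{\prec}$, which gives a forest-ordering; and if $F$ is a tree, then $T^{\prec}$ is a subgraph of a tree, hence a forest, so by~\cite{AACL23} $T$ also admits a tree-ordering. In the other direction, $T^{\prec}$ is itself a FAS and inherits the property (forest or tree) imposed on it.

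Combining the equivalences, we obtain the chain
\[
\text{$T$ has a tree-FAS} \;\Longleftrightarrow\; \text{$T$ has a tree-ordering} \;\Longleftrightarrow\; \text{$T$ has a forest-ordering} \;\Longleftrightarrow\; \text{$T$ has a forest-FAS},
\]
where the middle equivalence is precisely the statement of~\cite{AACL23}. Applying this to the tournament $T_{\I}$ produced in \autoref{subsec:construction}, \autoref{thm:main} yields that $\I$ is satisfiable if and only if $T_{\I}$ has a tree-FAS, which gives NP-hardness. There is no real obstacle in this argument: the full combinatorial content is already in \autoref{thm:main} and in~\cite{AACL23}; the only thing to be careful about is the (entirely routine) translation between FAS and ordering, which is what the previous paragraph carries out.
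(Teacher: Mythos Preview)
Your proposal is correct and follows exactly the approach the paper uses: deduce the result from \autoref{thm:main} together with the forest-ordering $\Leftrightarrow$ tree-ordering equivalence of~\cite{AACL23}. You are simply more explicit than the paper in spelling out NP-membership and the (routine) translation between having a $\mathcal C$-FAS and having a $\mathcal C$-ordering, but the underlying argument is identical.
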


It is natural to ask for which class of graphs $\mc C$, the $\mc C$-\textsc{FAS} Problem is in $P$. Two natural candidates of such $\mc C$ are paths and matching. More formally:

\begin{problem}
What is the complexity of the $\mc C$-\textsc{FAS} Problem when $\mc C$ is the set of all paths? when $\mc C$ is the set of graphs with maximum degree $1$?
\end{problem}

\clearpage

\section{Code for \autoref{lem:unique_tree_ordering}}\label{section:magical_code}

The following code is used to verify that the tournament shown in \autoref{figure:magical-tournament} admits an unique tree-ordering.

\begin{python}
    from itertools import permutations

    # is_forest(T,P)
    # Input:
    # - a tournament, given as a 8x8 matrix `T` such that
    #   T[u][v] iff there is an arc u -> v
    # - an ordering of V(T), given as a permutation `P`
    #   of [0,7] such that u < v iff P[u] < P[v]
    # Output:
    # - a boolean: is `T^<` a forest?
    def is_forest(T, P):
    visited = set()

    #   Recursive depth-first search looking for a cycle
    #   in T^< starting from rot `u`, and assuming we
    #   just visited vertex `parent` (initially, u itself).
    def dfs(u, parent):
    if u in visited:
    return True
    visited.add(u)
    return any(T[u][v] == (P[v] <= P[u]) and dfs(v, u)
    for v in range(8) if v != parent)

    return all(x in visited or not dfs(x, x)
    for x in range(8))

    # The tournament in Figure 1.
    T = [
    [ 0, 1, 1, 0, 0, 0, 1, 0],
    [ 0, 0, 1, 1, 1, 1, 0, 0],
    [ 0, 0, 0, 1, 1, 0, 1, 1],
    [ 1, 0, 0, 0, 1, 1, 1, 1],
    [ 1, 0, 0, 0, 0, 1, 1, 1],
    [ 1, 0, 1, 0, 0, 0, 1, 1],
    [ 0, 1, 0, 0, 0, 0, 0, 1],
    [ 1, 1, 0, 0, 0, 0, 0, 0]
    ]

    # We iterate over all permutations of [0,7], and check
    # that the only permutation such that is_forest(T, P)
    # returns True is the identity.
    for P in permutations(range(8)):
    assert is_forest(T, P) == (P == (0,1,2,3,4,5,6,7))
\end{python}

\clearpage

\bibliographystyle{abbrvnat}
\bibliography{main}

\begin{thebibliography}{36}
\providecommand{\natexlab}[1]{#1}
\providecommand{\url}[1]{\texttt{#1}}
\expandafter\ifx\csname urlstyle\endcsname\relax
  \providecommand{\doi}[1]{doi: #1}\else
  \providecommand{\doi}{doi: \begingroup \urlstyle{rm}\Url}\fi

\bibitem[Aboulker et~al.(2023{\natexlab{a}})Aboulker, Aubian, Charbit, and Lopes]{AACL23}
P.~Aboulker, G.~Aubian, P.~Charbit, and R.~Lopes.
\newblock Clique number of tournaments, 2023{\natexlab{a}}.
\newblock Preprint available at \href{https://arxiv.org/abs/2310.04265}{arXiv:2310.04265}.

\bibitem[Aboulker et~al.(2023{\natexlab{b}})Aboulker, Aubian, Charbit, and Thomass\'e]{AACT24}
P.~Aboulker, G.~Aubian, P.~Charbit, and S.~Thomass\'e.
\newblock Personal communication.
\newblock 2023{\natexlab{b}}.

\bibitem[Abu-Khzam(2010)]{ABUKHZAM2010524}
F.~N. Abu-Khzam.
\newblock A kernelization algorithm for d-hitting set.
\newblock \emph{Journal of Computer and System Sciences}, 76\penalty0 (7):\penalty0 524--531, 2010.
\newblock \doi{10.1016/j.jcss.2009.09.002}.

\bibitem[Ailon et~al.(2008)Ailon, Charikar, and Newman]{10.1145/1411509.1411513}
N.~Ailon, M.~Charikar, and A.~Newman.
\newblock Aggregating inconsistent information: Ranking and clustering.
\newblock \emph{Journal of the ACM}, 55\penalty0 (5), 2008.
\newblock \doi{10.1145/1411509.1411513}.

\bibitem[Alon et~al.(2001)Alon, Pach, and Solymosi]{APS01}
N.~Alon, J.~Pach, and J.~Solymosi.
\newblock Ramsey-type theorems with forbidden subgraphs.
\newblock \emph{Combinatorica}, 21\penalty0 (2):\penalty0 155--170, 2001.
\newblock \doi{10.1007/s004930100016}.

\bibitem[Aubian(2024)]{A24}
G.~Aubian.
\newblock Computing the clique number of tournaments, 2024.
\newblock Preprint available at \href{https://arxiv.org/abs/2401.07776}{arXiv:2401.07776}.

\bibitem[Bang-Jensen and Gutin(2007)]{digraphs_book}
J.~Bang-Jensen and G.~Gutin.
\newblock \emph{Digraphs: Theory, Algorithms and Applications}.
\newblock Springer, London, 2007.

\bibitem[Berger et~al.(2013)Berger, Choromanski, Chudnovsky, Fox, Loebl, Scott, Seymour, and Thomassé]{BCCFLSST13}
E.~Berger, K.~Choromanski, M.~Chudnovsky, J.~Fox, M.~Loebl, A.~Scott, P.~Seymour, and S.~Thomassé.
\newblock Tournaments and colouring.
\newblock \emph{Journal of Combinatorial Theory, Series B}, 103:\penalty0 1–20, 2013.
\newblock \doi{10.1016/j.jctb.2012.08.003}.

\bibitem[Berger et~al.(2022)Berger, Choromanski, Chudnovsky, and Zerbib]{BCCZ22}
E.~Berger, K.~Choromanski, M.~Chudnovsky, and S.~Zerbib.
\newblock Tournaments and the strong {Erd\H{o}s}–{Hajnal} property.
\newblock \emph{European Journal of Combinatorics}, 100:\penalty0 103440, 2022.
\newblock \doi{10.1016/j.ejc.2021.103440}.

\bibitem[Bessy et~al.(2011)Bessy, Fomin, Gaspers, Paul, Perez, Saurabh, and Thomassé]{BFGPPST11}
S.~Bessy, F.~V. Fomin, S.~Gaspers, C.~Paul, A.~Perez, S.~Saurabh, and S.~Thomassé.
\newblock Kernels for feedback arc set in tournaments.
\newblock \emph{Journal of Computer and System Sciences}, 77\penalty0 (6):\penalty0 1071--1078, 2011.
\newblock \doi{10.1016/j.jcss.2010.10.001}.

\bibitem[Bokal et~al.(2004)Bokal, Fijavz, Juvan, Kayll, and Mohar]{DFJM04}
D.~Bokal, G.~Fijavz, M.~Juvan, P.~Kayll, and B.~Mohar.
\newblock The circular chromatic number of a digraph.
\newblock \emph{Journal of Graph Theory}, 46:\penalty0 227 -- 240, 07 2004.
\newblock \doi{10.1002/jgt.20003}.

\bibitem[Charbit et~al.(2007)Charbit, Thomass{\'{e}}, and Yeo]{CTY07}
P.~Charbit, S.~Thomass{\'{e}}, and A.~Yeo.
\newblock The minimum feedback arc set problem is {NP}-hard for tournaments.
\newblock \emph{Combinatorics, Probability \& Computing}, 16:\penalty0 1--4, 2007.
\newblock \doi{10.1017/S0963548306007887}.

\bibitem[Chen et~al.(2007)Chen, Hu, and Zang]{chen2007min}
X.~Chen, X.~Hu, and W.~Zang.
\newblock A min-max theorem on tournaments.
\newblock \emph{SIAM Journal on Computing}, 37\penalty0 (3):\penalty0 923--937, 2007.
\newblock \doi{10.1137/060649987}.

\bibitem[Chudnovsky et~al.(2024)Chudnovsky, Scott, Seymour, and Spirkl]{CSSS24}
M.~Chudnovsky, A.~Scott, P.~Seymour, and S.~Spirkl.
\newblock Pure pairs. {X.} tournaments and the strong {Erd\H{o}s}-{Hajnal} property.
\newblock \emph{European Journal of Combinatorics}, 115:\penalty0 103786, 2024.
\newblock \doi{10.1016/j.ejc.2023.103786}.

\bibitem[Cohen et~al.(1997)Cohen, Schapire, and Singer]{NIPS1997_e11943a6}
W.~W. Cohen, R.~E. Schapire, and Y.~Singer.
\newblock Learning to order things.
\newblock In \emph{Proc. of the 10th Advances in Neural Information Processing Systems (NIPS)}, volume~10. MIT Press, 1997.

\bibitem[Dom et~al.(2009)Dom, Lokshtanov, and Saurabh]{10.1007/978-3-642-02927-1_32}
M.~Dom, D.~Lokshtanov, and S.~Saurabh.
\newblock Incompressibility through colors and ids.
\newblock In \emph{Proc. of the 36th International Colloquium on Automata, Languages and Programming (ICALP)}, pages 378--389, 2009.
\newblock \doi{10.1007/978-3-642-02927-1_32}.

\bibitem[Dwork et~al.(2002)Dwork, Kumar, Naor, and Sivakumar]{Dwork2002RankAR}
C.~Dwork, R.~Kumar, M.~Naor, and D.~Sivakumar.
\newblock Rank aggregation revisited.
\newblock 2002.
\newblock URL \url{https://api.semanticscholar.org/CorpusID:13976791}.

\bibitem[El-Zahar and Erd\H{o}s(1985)]{EE85}
M.~El-Zahar and P.~Erd\H{o}s.
\newblock On the existence of two non-neighboring subgraphs in a graph.
\newblock \emph{Combinatorica}, 5\penalty0 (4):\penalty0 295--300, 1985.
\newblock \doi{10.1007/BF02579243}.

\bibitem[Erd\H{o}s and Hajnal(1977)]{EH77}
P.~Erd\H{o}s and A.~Hajnal.
\newblock On spanned subgraphs of graphs.
\newblock \emph{Beitrage zur Graphentheorie und deren Anwendungen, Kolloq. Oberhof (DDR)}, 1977.

\bibitem[Fomin et~al.(2010)Fomin, Lokshtanov, Raman, and Saurabh]{Fomin_Lokshtanov_Raman_Saurabh_2010}
F.~Fomin, D.~Lokshtanov, V.~Raman, and S.~Saurabh.
\newblock Fast local search algorithm for weighted feedback arc set in tournaments.
\newblock \emph{Proc. of the 24th AAAI Conference on Artificial Intelligence (AAAI)}, 24\penalty0 (1):\penalty0 65--70, 2010.
\newblock \doi{10.1609/aaai.v24i1.7557}.

\bibitem[Harutyunyan et~al.(2019)Harutyunyan, Le, Thomass{\'{e}}, and Wu]{HTW19}
A.~Harutyunyan, T.~Le, S.~Thomass{\'{e}}, and H.~Wu.
\newblock Coloring tournaments: From local to global.
\newblock \emph{Journal of Combinatorial Theory, Series {B}}, 138:\penalty0 166--171, 2019.
\newblock \doi{10.1016/J.JCTB.2019.01.005}.

\bibitem[Kemeny(1959)]{Kemeny1959}
J.~Kemeny.
\newblock Mathematics without numbers.
\newblock \emph{Daedalus}, 88:\penalty0 571--590, 1959.

\bibitem[Kemeny and Snell(1962)]{Kemeny_Snell_1962}
J.~Kemeny and J.~Snell.
\newblock \emph{Mathematical Models in the Social Sciences}.
\newblock Blaisdell, 1962.

\bibitem[Kenyon-Mathieu and Schudy(2007)]{Kenyon_Mathieu_Schudy_2007}
C.~Kenyon-Mathieu and W.~Schudy.
\newblock How to rank with few errors.
\newblock In \emph{Proc. of the 39th ACM Symposium on Theory of Computing (STOC)}, pages 95--103, 2007.
\newblock \doi{10.1145/1250790.1250806}.

\bibitem[Klingelhoefer and Newman(2024{\natexlab{a}})]{KFN24}
F.~Klingelhoefer and A.~Newman.
\newblock Bounding the chromatic number of dense digraphs by arc neighborhoods.
\newblock \emph{Combinatorica}, 44\penalty0 (4):\penalty0 881–895, 2024{\natexlab{a}}.
\newblock \doi{10.1007/s00493-024-00098-z}.

\bibitem[Klingelhoefer and Newman(2024{\natexlab{b}})]{KN24}
F.~Klingelhoefer and A.~Newman.
\newblock Coloring tournaments with few colors: Algorithms and complexity.
\newblock \emph{SIAM Journal on Discrete Mathematics}, 38\penalty0 (4):\penalty0 3111--3133, 2024{\natexlab{b}}.
\newblock \doi{10.1137/23M1602127}.

\bibitem[Nguyen et~al.(2024)Nguyen, Scott, and Seymour]{NSS24}
T.~Nguyen, A.~Scott, and P.~Seymour.
\newblock On a problem of {El-Zahar} and {Erd\H{o}s}.
\newblock \emph{Journal of Combinatorial Theory, Series B}, 165:\penalty0 211--222, 2024.
\newblock ISSN 0095-8956.
\newblock \doi{10.1016/j.jctb.2023.11.004}.

\bibitem[Nguyen et~al.(2025{\natexlab{a}})Nguyen, Scott, and Seymour]{NGUYEN2025146}
T.~Nguyen, A.~Scott, and P.~Seymour.
\newblock Some results and problems on tournament structure.
\newblock \emph{Journal of Combinatorial Theory, Series B}, 173:\penalty0 146--183, 2025{\natexlab{a}}.
\newblock \doi{10.1016/j.jctb.2025.02.002}.

\bibitem[Nguyen et~al.(2025{\natexlab{b}})Nguyen, Scott, and Seymour]{NSS25}
T.~Nguyen, A.~Scott, and P.~Seymour.
\newblock Induced subgraph density. {VI}. bounded {VC}-dimension.
\newblock \emph{Advances in Mathematics}, 482\penalty0 (A):\penalty0 110601, 2025{\natexlab{b}}.
\newblock \doi{10.1016/j.aim.2025.110601}.

\bibitem[Raman and Saurabh(2006)]{Raman_Saurabh_2006}
V.~Raman and S.~Saurabh.
\newblock Parameterized algorithms for feedback set problems and their duals in tournaments.
\newblock \emph{Theoretical Computer Science}, 351\penalty0 (3):\penalty0 446--458, 2006.
\newblock \doi{10.1016/j.tcs.2005.10.010}.

\bibitem[Scott and Seymour(2020)]{SS20}
A.~Scott and P.~Seymour.
\newblock A survey of $\chi$‐boundedness.
\newblock \emph{Journal of Graph Theory}, 95, 08 2020.
\newblock \doi{10.1002/jgt.22601}.

\bibitem[Seshu and Reed(1961)]{Seshu_Reed_1961}
S.~Seshu and M.~B. Reed.
\newblock \emph{Linear Graphs and Electrical Networks}.
\newblock Addison-Wesley, 1961.

\bibitem[Slater(1961)]{Slater_1961}
P.~Slater.
\newblock Inconsistencies in a schedule of paired comparisons.
\newblock \emph{Biometrika}, 48:\penalty0 303--312, 1961.

\bibitem[Speckenmeyer(1989)]{Speckenmeyer_1989}
E.~Speckenmeyer.
\newblock On feedback problems in digraphs.
\newblock In \emph{Proc. of the 5th Workshop on Graph-Theoretic Concepts in Computer Science (WG)}, volume 411 of \emph{Lecture Notes in Computer Science}, pages 218--231, 1989.

\bibitem[Zayat(2024)]{ZAYAT2024113920}
S.~Zayat.
\newblock Forests and the strong {Erd\H{o}s}-{Hajnal} property.
\newblock \emph{Discrete Mathematics}, 347\penalty0 (5):\penalty0 113920, 2024.
\newblock \doi{10.1016/j.disc.2024.113920}.

\bibitem[Zayat and Ghazal(2023)]{GZ23}
S.~Zayat and S.~Ghazal.
\newblock Tournaments and the {Erd\H{o}s}-{Hajnal} conjecture.
\newblock \emph{Australasian Journal of Combinatorics}, 86\penalty0 (2):\penalty0 351--372, 2023.

\end{thebibliography}
\label{sec:biblio}

\end{document}